\newcommand{\thefont}[2]{\fontsize{#1}{#2}\fontshape{n}\selectfont}
\newcommand{\1}{\rlap{\thefont{10pt}{12pt}1}\kern.16em\rlap{\thefont{11pt}{13.2pt}1}\kern.4em}
\def\singlespace{\def\baselinestretch{1}\@normalsize}
\title{{\sc Nonparametric adaptive time-dependent multivariate function estimation}}
\author{
  {\em J\'er\'emie Bigot}\footnote{Institut Sup\'erieur de l'A\'eronautique et de l'Espace,
D\'epartement Math\'ematiques, Informatique, Automatique,
10 Avenue \'Edouard-Belin, BP 54032-31055, Toulouse CEDEX 4, France. Email: jeremie.bigot@isae.fr}, \quad DMIA/ISAE - University of Toulouse \\ 
  {\em Theofanis Sapatinas}\footnote{Department of Mathematics and Statistics,
University of Cyprus, P.O. Box 20537, CY 1678, Nicosia, Cyprus. Email: fanis@ucy.ac.cy}, \quad University of Cyprus
  }
\newcommand{\bigO}[1]{\ensuremath{\mathop{}\mathopen{}O\mathopen{}\left(#1\right)}}
\newcommand{\smallO}[1]{\ensuremath{\mathop{}\mathopen{}o\mathopen{}\left(#1\right)}}
\newcommand{\EE}{\ensuremath{{\mathbb E}}}
\newcommand{\HH}{\ensuremath{\LL^{2}(\XX)}}
\newcommand{\LL}{\ensuremath{{\mathbb L}}}
\newcommand{\PP}{\ensuremath{{\mathbb P}}}
\newcommand{\RR}{\ensuremath{{\mathbb R}}}
\DeclareMathOperator*{\argmin}{arg\,min}
\newcommand{\var}{\mbox{Var}}
\newcommand{\bg}{\mbox{${\mathbf g}$}}
\newcommand{\bB}{\mbox{${\mathbf B}$}}
\newcommand{\balpha}{{\boldsymbol{\alpha}}}
\newcommand{\bfun}{\boldsymbol{f}}
\newcommand{\XX}{\ensuremath{{\mathcal X}}}
\newcommand{\YY}{\ensuremath{{\mathcal Y}}}
\newcommand{\risk}{\ensuremath{{\mathcal R}}}
\newtheorem{theo}{Theorem}
\newtheorem{lemma}{Lemma}
\newtheorem{definition}{Definition}
\numberwithin{equation}{section}
\begin{document}

\maketitle

\begin{abstract}
We consider the nonparametric estimation problem of time-dependent multivariate functions observed in a presence of additive cylindrical Gaussian white noise of a small intensity. We derive minimax lower bounds for the $L^2$-risk in the proposed
spatio-temporal model as the intensity goes to zero, when the underlying unknown response function is assumed to belong
to a ball of appropriately constructed inhomogeneous time-dependent multivariate functions, motivated by practical applications. Furthermore, we propose both non-adaptive linear and adaptive non-linear wavelet estimators that are asymptotically optimal (in the minimax sense) in a wide range of the so-constructed balls of inhomogeneous time-dependent multivariate functions. The usefulness of the suggested adaptive nonlinear wavelet estimator is illustrated with the help of simulated and real-data examples.
\end{abstract}

\noindent \emph{Keywords:} Adaptivity; Besov spaces; Block Thresholding;  Minimax Estimators; Time-dependent image processing, Wavelet Analysis.  

\noindent\emph{AMS classifications:} Primary 62G05, 62G08, 62G20; Secondary 62H35.

\section{Introduction}
\label{sec:intro}
The nonparametric estimation problem of high-dimensional objects has been considered in the literature over the last three decades. With the help of appropriate balls in function spaces, such as, H\"older, Sobolev or Besov balls, that measure smoothness of the unknown underlying high-dimensional object, 
asymptotical (as the sample size goes to infinity) optimal properties (in the minimax sense) of various linear and non-linear estimators, such as, kernel, spline or wavelet estimators, have been obtained (see, e.g., \cite{MR1045442}, \cite{MR1226450} (regression setting) and \cite{MR2640738} (density setting), and the references therein).

\medskip
These optimality properties were studied by \cite{MR1856685} in the case of time-dependent multivariate response functions.  By following a trend to derive theoretical properties, \cite{MR1856685} considered a ``continuous-time'' model for the estimation problem of time-dependent multivariate functions observed in a presence of 
additive cylindrical Gaussian white noise, that is, they considered
\begin{equation}
d Y_{\epsilon}(t,\underline{x}) = \bfun(t,\underline{x}) d \underline{x} + \epsilon d W(t,\underline{x}), \label{model:direct}
\end{equation}
where  $t \in T$ ($T$ is a compact subset of $\RR$) is the time variable, $\underline{x} \in \XX$ ($\XX$ is a compact subset of $\RR^{d}$, $d \geq 1$) is the space variable, $ \bfun \in  \LL^{2}(  T \times \XX )$ is the time-dependent multivariate function that we wish to estimate, $d W(t,\underline{x})$ is a cylindrical orthogonal Gaussian random measure (representing additive noise in the measurements), and $\epsilon >0$ is a small level of noise, that may let be going to zero for studying asymptotic properties. 

\medskip
A formal definition of a cylindrical orthogonal Gaussian random measure can be found in Section 2.1 of \cite{MR1856685}. Moreover, we understand \eqref{model:direct} in a generalized sense, that is, the observable elements are treated as linear functionals, so that the process $Y_{\epsilon}(t,\underline{x})$, $t \in T$, $\underline{x} \in \XX$, is correctly defined (see Section 
\ref{subsec:ssm}). Also, without loss of generality, in the sequel, we assume that $T=[0,1]$ and $\XX=[0,1]^d$.

\medskip
Assume periodic assumptions in each argument of $\bfun(t,\underline{x})$, $t \in T$, $\underline{x} \in \XX$. Consider H\"older continuity in $L^2(\XX)$ on the derivatives of $\bfun(t,\underline{x})$ with respect to $t \in T$, uniformly over $\underline{x} \in \XX$, and H\"older continuity in $L^2(T)$ on the partial derivatives of $\bfun(t,\underline{x})$ with respect to the elements of $\underline{x} \in \XX$, uniformly over $t \in T$. Then, under known a-priori smoothness (i.e., knowing the involved H\"older parameters) of $\bfun(t,\underline{x})$, \cite{MR1856685} constructed a non-adaptive kernel-projection (linear) estimator and obtained an asymptotical (as $\epsilon \rightarrow 0$) upper bound of its $L^2$-risk (on $\XX$), uniformly over a set $T_1 \subset T$, that depends on $\epsilon$ and the involved smoothness parameters (see, \cite{MR1856685}, Theorem 4.1). Moreover, they have showed that, asymptotically, this 
upper bound cannot be improved (see \cite{MR1856685}, Lemma 5.3), thus establishing the asymptotical optimality (in the minimax sense) of their suggested estimator.

\medskip
Our aim is twofold. From a theoretical point of view, we extend the asymptotical optimal convergence rates derived in \cite{MR1856685}. In particular, when smoothness is measured in appropriate balls of  inhomogeneous functions, constructed with the help of tensor-product wavelet bases and Besov spaces, with or without a-priori knowledge of the involved smoothness parameters, we construct, respectively, non-adaptive linear (projection) or adaptive non-linear (block-thresholding) wavelet estimators that achieve the established asymptotical optimal convergence rates under the $L^2$-risk. From a practical point of view, we demonstrate the usefulness of the suggested adaptive nonlinear wavelet thresholding estimator in practical applications. In particular, we show the superiority of the suggested estimator in terms of average mean squared error over pixel by pixel and slice by slice wavelet denoising estimators, both  with universal thresholds.

\medskip
The paper is organized as follows. Section \ref{sec:motex} provides a motivating example. Section \ref{sec:wb} contains a brief summary of the tensor-product wavelet bases and standard Besov spaces while Section \ref{sec:smoothness} discusses the function spaces that we consider to appropriately model the considered inhomogeneous time-dependent multivariate functions. Section \ref{lowerF} contains the minimax lower bounds for the $L^2$-risk. Section \ref{upperF} introduces both non-adaptive linear and adaptive non-linear wavelet estimators and provides their minimax upper bounds for the $L^2$-risk in a wide range of the so-constructed balls of inhomogeneous time-dependent multivariate functions. Section \ref{numerical} demonstrates the usefulness of the suggested adaptive nonlinear wavelet estimator with the help of simulated and real-data examples. Section \ref{conclusions} contains some concluding remarks. Finally, Section \ref{appF} (Appendix) provides two technical lemmas that are used in the proofs of the main theoretical results.

\section{A motivating example}
\label{sec:motex}

Increasingly, scientific studies yield time-dependent $d$-dimensional images, in which the observed data consist  of sets of curves recorded on the pixels of $d$-dimensional images observed at different times or wavelengths, see e.g.\ \cite{MR2649646}. Examples include temporal brain response intensities measured by functional magnetic resonance imaging (fMRI) \cite{Whitcher},  satellite remote sensing images of landscapes \cite{Ju}, and functional brain mapping using electroencephalography (EEG) and magnetoencephalography (MEG) \cite{Ou}. In many applications, the measured curves tend to be spiky and this requires flexible adaptive and local modeling of their variations.  The high dimensionality and noise that characterize such time-dependent images makes difficult the estimation of the evolution of each pixel intensity over time (or wavelength). 

We now discuss a specific application that motivates the estimation of $\bfun$ in  model \eqref{model:direct}, and the choice of the function spaces that we use to measure the smoothness of $\bfun$ (see Section \ref{sec:smoothness}). An example of application and data fitting into model  \eqref{model:direct} is satellite remote sensing imaging of landscapes, where the data are in the form of a multiband satellite 2-dimensional image of remote sensing measurements in various spectral bands of an area that contains roads, forests, vegetation, lakes and fields, see \cite{MR2649646}. As an illustrative example,  we display in Figure \ref{fig:ONERA}(a) a  typical temporal (or wavelength) slice (i.e., 2-dimensional grey-level image), and we plot in Figure \ref{fig:ONERA}(b) two curves (1-dimensional signals) corresponding to two selected pixels highlighted in blue and green in  Figure \ref{fig:ONERA}(a). With respect to model \eqref{model:direct}, Figure \ref{fig:ONERA}(a) corresponds to a noisy version of $\underline{x} \mapsto \bfun(t,\underline{x}) $ for some fixed $t \in T$, while   Figure \ref{fig:ONERA}(b) corresponds to a noisy version of $t \mapsto \bfun(t,\underline{x}) $ for some fixed $\underline{x} \in \XX$. \\
 

\begin{figure}[htbp]
\centering
\subfigure[]
{ \includegraphics[width=6cm]{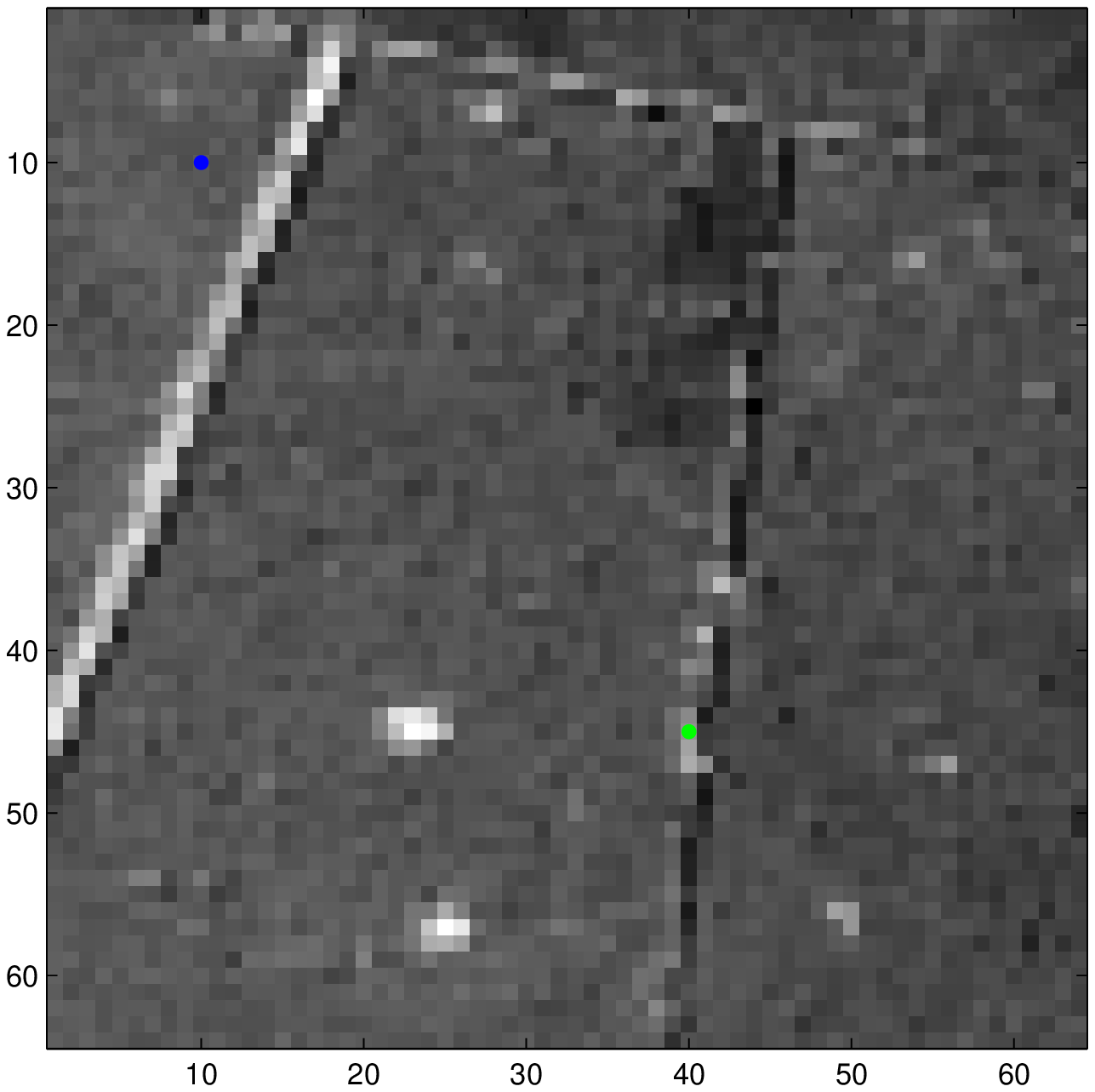}  }
\subfigure[]
{ \includegraphics[width=7.5cm]{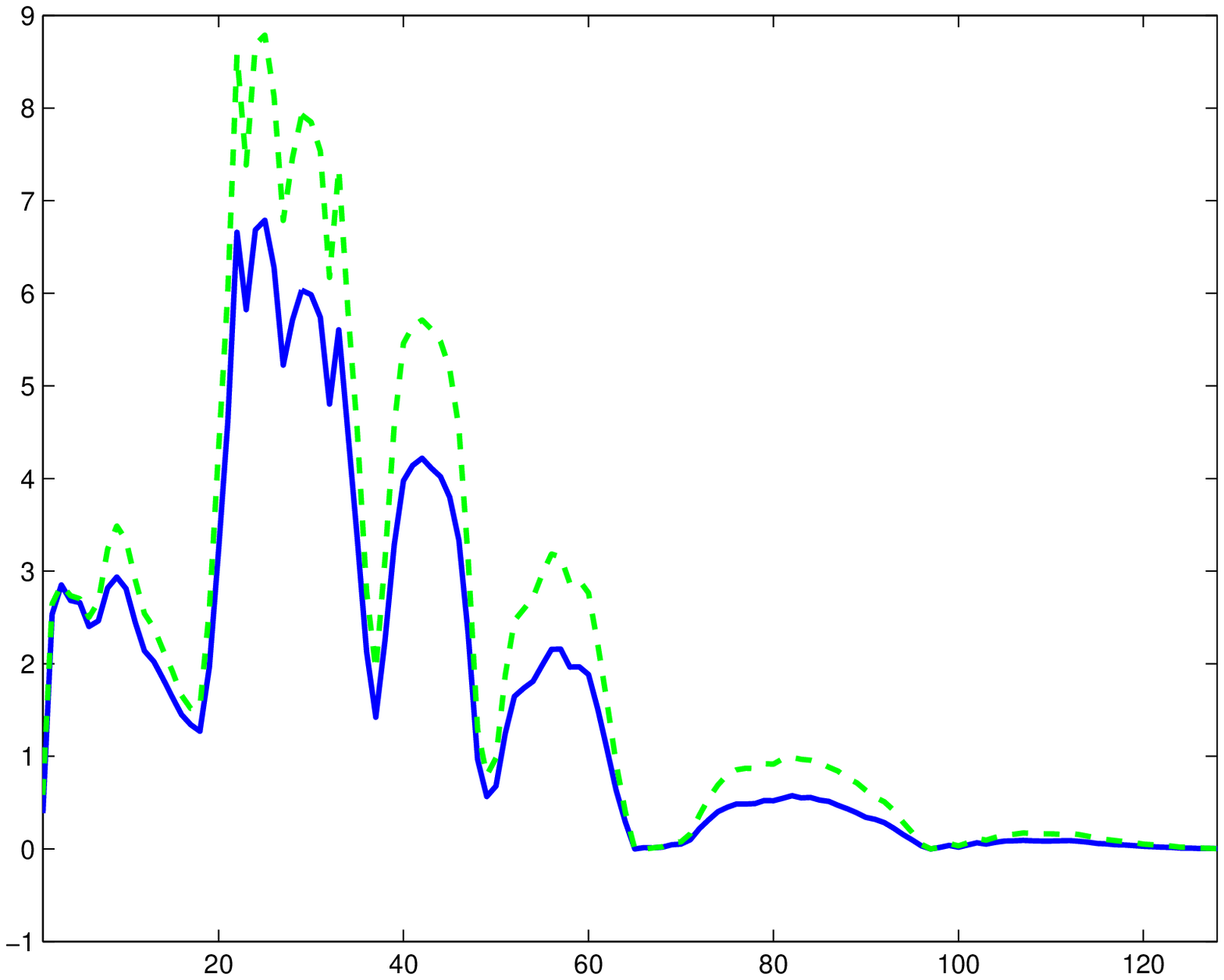} }

\caption{Satellite remote sensing image ($64 \times 64$ pixels, over $128$ wavelengths): (a) a  2-dimensional image measured at a specific wavelength; (b) evolution over wavelength of the intensities of the two pixels in green and blue marked in the image shown in (a).} \label{fig:ONERA}
\end{figure}

\section{Wavelets and Besov spaces}
\label{sec:wb}

We briefly consider tensor-product wavelet bases of  $\LL^{2}(\RR^d)$, $d \geq 1$, and recall some of their properties; for a detailed description of their construction, we refer to \cite{MR2479996}. Assume that we have at our disposal a 1-dimensional scaling function (i.e., a father wavelet) $\phi$ and a 1-dimensional wavelet function (i.e., a mother wavelet) $\psi$, both with compact supports. The scaling and wavelet functions of $\phi$ and $\psi$, at scale $j$ (i.e., at resolution level $2^{j}$) will be denoted by $\phi_{\lambda}$ and $\psi_{\lambda}$, respectively, where the index $\lambda$ summarizes both the usual scale and space parameters $j$ and $k$. In other words, for $d=1$, we set $\lambda = (j,k)$ and denote $\phi_{j,k} (\cdot)= 2^{j/2} \phi(2^{j}\cdot -k)$) and $\psi_{j,k}(\cdot) = 2^{j/2} \psi(2^{j}\cdot -k)$). For $d \geq 2$, the notation $\psi_{\lambda}$ stands for the adaptation of scaling and wavelet functions to $\RR^d$ (see  \cite{MR2479996}, Chapter 7). The notation $|\lambda| = j$ will be used to denote a wavelet at scale $j$, while $|\lambda| < j$ denotes a wavelet at scale $j'$, with $j_{0} \leq j' < j$,  where $j_{0}$ denotes the coarse level of approximation (usually called the primary resolution level). With the above notation, we assume that

\begin{description}
\item[-] the scaling functions $(\phi_{\lambda})_{|\lambda| = j}$ span a finite dimensional space $V_{j}$ within a multiresolution hierarchy $V_{0} \subset V_{1} \subset \ldots \subset L^{2}(\RR^d,dy)$, such that $\mbox{dim}(V_{j}) = 2^{jd}$.
\item[-] the scaling functions $(\phi_{\lambda})_{|\lambda| = j}$ form an orthonormal basis of $V_{j}$ and the wavelets $(\psi_{\lambda})_{|\lambda| = j}$ form an orthonormal basis of $W_{j}$ (with $W_j$ being  the orthogonal complement of $V_{j}$ into $V_{j+1}$).
\item[-]  Let $\YY$ be a compact subset of $\RR^d$, $d \geq 1$. Assuming periodicity in each argument of $\underline{y} \in \YY$,  and using standard wavelet bases ($d=1$) or tensor-product wavelet bases ($d \geq 2$)  of $L^2(\YY)$ (see, e.g. \cite{MR2479996}, Chapter 7),  any  $f \in \LL^{2}(\YY)$ can be decomposed as
$$
f(\underline{y})= \sum_{|\lambda| = j_{0}} c_{\lambda} \phi_{\lambda}(\underline{y}) + \sum_{j = j_{0}}^{+\infty} \sum_{|\lambda| = j} \beta_{\lambda} \psi_{\lambda}(\underline{y}), \quad \underline{y} \in \YY,
$$
where 
$$
c_{\lambda} = \langle f,\phi_{\lambda} \rangle_{\YY} \quad  \text{and} \quad \beta_{\lambda} = \langle f,\psi_{\lambda} \rangle_{\YY}.
$$
In order to simplify the notation, as it is commonly used, we write $(\psi_{\lambda})_{|\lambda| = j_{0}-1}$ for $(\phi_{\lambda})_{|\lambda| = j_{0}}$, and, thus, $f$ can be written in the compact form
$$
f(\underline{y}) =  \sum_{j = j_{0}-1}^{+\infty} \sum_{|\lambda| = j} \alpha_{\lambda} \psi_{\lambda}(\underline{y}), \quad \underline{y} \in \YY,
$$
where $\alpha_{\lambda}$ denotes either the scaling coefficients $c_{\lambda}$ or the wavelet coefficients $\beta_{\lambda}$.
\end{description}


\noindent
Consider also the following balls of (inhomogeneous) Besov spaces.

\medskip
Let $s_{1} > 0$ be a smoothness parameter in the domain $\YY_1$ (that is, $\YY$ with $d \geq 2$), and let $1 \leq p_{1},q_{1} \leq +\infty$. Let $(\psi_{\lambda})_{|\lambda| = j}$, $j \geq j_0$, be the (periodic) d-dimensional (tensor-product) compactly supported orthonormal wavelet basis of $L^{2}(\YY_1)$, with the convention that $(\psi_{\lambda})_{|\lambda| = j_{0}-1}$ denotes the scaling functions $(\phi_{\lambda})_{|\lambda| = j_{0}}$. Assume that the 1-dimensional scaling function $\phi$ and the 1-dimensional wavelet function $\psi$ are $\tau_{1}$-times continuously differentiable (regularity of the wavelet system ($\phi, \psi$)) with $0<s_{1}<\tau_{1}$, and assume that $s_{1} + d( 1/2 - 1/p_{1} ) > 0$. Define the norm $\| \cdot \|^{s_{1}}_{p_{1},q_{1}}$ by
$$\|f\|^{s_{1}}_{p_{1},q_{1}}  =\left( \sum_{j = j_0-1}^{+\infty} 2^{j(s_{1} + d( 1/2 - 1/p_{1})) q_{1}} \left( \sum_{ |\lambda|=j}
|\langle f,\psi_\lambda\rangle |^{p_{1}}\right)^{ q_{1}/p_{1}}\right) ^{1/q_{1}},$$
with the respective above sums replace by maximum if $p_{1}=+\infty$ and/or $q_{1} = +\infty$.  Then, the norm $\| \cdot \|^{s_{1}}_{p_{1},q_{1}}$ is equivalent to the traditional Besov norm (see e.g  \cite{MR1618204} for further details), and one can thus define the following Besov ball of radius $A_{1} > 0$
$$
B^{s_{1}}_{p_{1},q_{1}}(A_{1}) = \left\{f \in \LL^{2}(\YY_1), \;  \| f \|^{s_{1}}_{p_{1},q_{1}} \leq A_{1} \right\}.
$$

Let $s_{2} > 0$ be a smoothness parameter in the domain $\YY_2$ (that is $\YY$ with $d=1$), and let $1 \leq p_{2},q_{2} \leq +\infty$. Let $(\tilde{\psi}_{m,\ell})_{m=m_{0}-1,\ell=0,\ldots,2^{m}-1}$ be a (periodic) 1-dimensional compactly supported orthonormal wavelet basis of $\LL^{2}(\YY_2)$, with the convention that $(\tilde{\psi}_{m_{0}-1,\ell})_{\ell=0,\ldots,2^{m_{0}}-1}$ denotes the scaling functions $(\tilde{\phi}_{m_{0},\ell})_{\ell=0,\ldots,2^{m_{0}}-1}$, where $m_{0}$ is the coarse (primary) resolution level.  Assume that the corresponding 1-dimensional scaling function $\tilde{\phi}$ and the 1-dimensional wavelet function $\tilde{\psi}$ are $\tau_{2}$-times continuously differentiable (regularity of the wavelet system ($\tilde{\phi}, \tilde{\psi}$)) with $0<s_{2}<\tau_{2}$, and assume that $s_{2} + 1/2 - 1/p_{2}  > 0$. Define the norm $\| \cdot \|^{s_{2}}_{p_{2},q_{2}}$ by
$$
\| g\|^{s_{2}}_{p_{2},q_{2}}  =\left( \sum_{m = m_0-1}^{+\infty} 2^{m(s_{2} + 1/2 - 1/p_{2}) q_{2}} \left( \sum_{ m= 0}^{2^m-1} |\langle g, \tilde{\psi}_{m,\ell} \rangle |^{p_{2}}\right)^{ q_{2}/p_{2}}\right) ^{1/q_{2}},
$$
with the respective above sums replace by maximum if $p_{2}=+\infty$ and/or $q_{2} = +\infty$.  Then, as noticed above, one can define the following Besov ball of radius $A_{2} > 0$
$$
B^{s_{2}}_{p_{2},q_{2}}(A_{2}) = \left\{g \in \LL^{2}(\YY_2), \;  \| g \|^{s_{2}}_{p_{2},q_{2}} \leq A_{2} \right\}.
$$

\section{Smoothness assumptions on the time-dependent multivariate response function} 
\label{sec:smoothness}

The statistical problem that we consider below is the estimation of the unknown time-dependent multivariate response function $\bfun(t,\underline{x})$, $\underline{x} \in \XX$, $t \in T$,  based on observations from model \eqref{model:direct}. Motivated by the practical application discussed in Section \ref{sec:motex}, in order to derive the asymptotical (as $\epsilon \rightarrow 0$) optimal (in the minimax sense) rates of convergence (for the $L^2$-risk), we consider the following functional space to model $\bfun(t,\underline{x})$, $\underline{x} \in \XX$, $t \in T$.  
\medskip

First, let us assume that, for each $t \in T$, the mapping $\underline{x} \mapsto \bfun(t,\underline{x})$ belongs to $\LL^{2}(\XX)$. Let $\Lambda = \{ \lambda, |\lambda| = j\}_{j_{0}-1 \leq j \leq +\infty}$. For each $t \in T$, the (periodic) $d$-dimensional wavelet basis $(\psi_{\lambda})_{\lambda \in \Lambda}$  is used to decompose $\bfun(t,\underline{x})$ as 
\begin{equation}
\bfun(t,\underline{x}) = \sum_{j = j_{0}-1}^{+\infty} \sum_{|\lambda| = j} \balpha_{\lambda}(t) \psi_{\lambda}(\underline{x}) \quad \mbox{with} \quad \balpha_{\lambda}(t) = \langle \bfun(t,\cdot) , \psi_{\lambda} \rangle_{\HH}, \quad \underline{x} \in \XX. \label{eq:balpha}
\end{equation}
Then, for each $\lambda \in \Lambda$, we assume that the mapping  $t \mapsto 
\balpha_{\lambda}(t)$ belongs to $\LL^{2}(T)$. For each $\lambda \in \Lambda$, the (periodic) 1-dimensional wavelet basis $(\tilde{\psi}_{m,\ell})_{m=m_{0}-1,\ell=0,\ldots,2^{m}-1}$ is used to decompose $\balpha_{\lambda}(t)$ as
\begin{equation}
\balpha_{\lambda}(t) = \sum_{m=m_{0}-1}^{+\infty} \sum_{\ell = 0}^{2^{m}-1} \tilde{\alpha}_{\lambda,m,\ell} \tilde{\psi}_{m,\ell}(t) \quad \mbox{with} \quad \tilde{\alpha}_{\lambda,m,\ell} = \langle \balpha_{\lambda},\tilde{\psi}_{m,\ell}  \rangle_{\LL^{2}(T)}, \quad t \in T.  \label{eq:alpha}
\end{equation}
Finally, by assuming that the mapping $(t, \underline{x}) \mapsto \bfun(t,\underline{x})$ belongs to $\LL^{2}(T \times \XX)$ for any $t \in T$ and $x \in \XX$, and consider the corresponding tensor product wavelet basis, $\bfun(t,\underline{x})$ can thus be decomposed as
$$
\bfun(t,\underline{x}) =  \sum_{j = j_{0}-1}^{+\infty} \sum_{|\lambda| = j}  \sum_{m=m_{0}-1}^{+ \infty} \sum_{\ell = 0}^{2^{m}-1} \tilde{\alpha}_{\lambda,m,\ell} \tilde{\psi}_{m,\ell}(t) \psi_{\lambda}(\underline{x}), \quad t \in T, \quad \underline{x} \in \XX.
$$

\medskip
We are now ready to introduce the following definition in order to characterize the smoothness  of the time-dependent multivariate function $\bfun(t,\underline{x})$, $t \in T$, $x \in \XX$.

\begin{definition}  
Let $A_{1} >0$ and $A_{2} >0$ be constants. Let $s_{1} > 0$ be a smoothness parameter in space domain $\XX$ and $s_{2} > 0$ be a smoothness parameter in time domain $T$, such that $0 < s_{1} < \tau_{1}$ and  $0 < s_{2} < \tau_{2}$, where $\tau_1$ and $\tau_2$ are the regularity parameters of the wavelet systems $(\phi,\psi)$ and $(\tilde{\phi},\tilde{\psi})$, respectively. Let $1 \leq p_{1},q_{1} \leq +\infty$, $1 \leq p_{2},q_{2} \leq +\infty$, and assume that $s_{1} + d( 1/2 - 1/p ) > 0$ and  $s_{2} +  1/2 - 1/p_{2} > 0$. Let $p = (p_{1},p_{2})$ and $q = (q_{1},q_{2})$. Define $\bB_{p,q}^{s_{1},s_{2}}(A_{1},A_{2})$ as the following ball of functions in $\LL^{2}(  T  \times \XX )$:
$$
\bB_{p,q}^{s_{1},s_{2}}(A_{1},A_{2}) = \left\{ \bfun \in  \LL^{2}(T \times \XX)\;  |  \; \sup_{t \in T} \{ \| \bfun(t,\cdot) \|^{s_{1}}_{p_{1},q_{1}} \} \leq A_{1} \mbox{ and }\| \balpha_{\lambda} \|^{s_{2}}_{p_{2},q_{2}}  \leq A_{\lambda} \mbox{ for all }  \lambda \in \Lambda \right\},
$$
where, for each $t \in T$,
$$\|\bfun(t,\cdot)\|^{s_{1}}_{p_{1},q_{1}}  =\left( \sum_{j = j_0-1}^{+\infty} 2^{j(s_{1} + d( 1/2 - 1/p_{1})) q_{1}} \left( \sum_{ |\lambda|=j}
|\langle \bfun(t, \cdot),\psi_\lambda\rangle |^{p_{1}}\right)^{ q_{1}/p_{1}}\right) ^{1/q_{1}},
$$
for each $\lambda \in \Lambda$,
$$
\| \balpha_{\lambda} \|^{s_{2}}_{p_{2},q_{2}} =  \left( \sum_{m=m_{0}-1}^{+ \infty}  2^{m(s_{2} +  1/2 - 1/p_{2}) q_{2}} \left(\sum_{\ell = 0}^{2^{m}-1}   | \tilde{\alpha}_{\lambda,m,\ell} |^{p_{2}}\right)^{ q_{2}/p_{2}}\right) ^{1/q_{2}},
$$
with
$$
 \tilde{\alpha}_{\lambda,m,\ell} = \int_{T\times \XX}   \bfun(t,\underline{x})  \tilde{\psi}_{m,\ell}(t) \psi_{\lambda}(\underline{x})   dt d\underline{x},
$$
and $(A_{\lambda})_{\lambda \in \Lambda}$ is a a set of positive constants such that
\begin{equation}
 \sum_{j = j_{0}-1}^{+\infty} \sum_{|\lambda| = j}   A_{\lambda}^{2} \leq A_{2}^{2}. \label{Alambda}
\end{equation}

\end{definition}

\medskip
Assuming that $\bfun(t,\underline{x}) \in \bB_{p,q}^{s_{1},s_{2}}(A_{1},A_{2})$ means that the multivariate function $\bfun(t,\cdot)$ belong to $B^{s_{1}}_{p_{1},q_{1}}(A_{1})$, uniformly over $t \in T$. This assumption also means that the smoothness of the wavelet coefficients $(\balpha_{\lambda}(\cdot))_{\lambda \in \Lambda}$ over time $t \in T$ is measured by the parameter $s_{2}$ through a Besov ball $B^{s_{2}}_{p_{2},q_{2}}(A_{\lambda})$ whose radius satisfies equation \eqref{Alambda}. It implies that $\sup_{\lambda \in \Lambda} \left\{A_{\lambda}  \right\} \leq A_{2}$ and, more importantly, that $\lim_{j \to + \infty, \; |\lambda| = j}  A_{\lambda} = 0$ so that the Besov norm $ \| \balpha_{\lambda} \|^{s_{2}}_{p_{2},q_{2}}$ goes to zero as the resolution level of the time-dependent wavelet coefficients $\balpha_{\lambda} (\cdot)$ goes to infinity. In practical applications, it correspond to the assumption that the high-resolution energy of a time-dependent multivariate function, when integrated over time,  is going to zero.  (In order to simplify the notation, we have dropped the dependence of $\bB_{p,q}^{s_{1},s_{2}}(A_{1},A_{2})$ on $(A_{\lambda})_{\lambda \in \Lambda}$.)

\medskip

To motivate the definition of the functional space $\bB_{p,q}^{s_{1},s_{2}}(A_{1},A_{2})$, let us consider the real-data example on satellite remote sensing data discussed in Section \ref{sec:motex}. For this time-dependent 2-dimensional image, we display in Figure \ref{fig:wavcoeff} the curve $t \mapsto \balpha_{\lambda} (t)$ for two types of wavelet coefficients, one at a low resolution level ($|\lambda| = 3$) and  another one at the highest resolution level ($|\lambda| = 5$). Clearly, the curve at the highest resolution level has a smallest amplitude which is consistent with the decay of $A_{\lambda}$ as $|\lambda|$ increases in the definition of  $\bB_{p,q}^{s_{1},s_{2}}(A_{1},A_{2})$. Moreover, due the shape of the curves in Figure \ref{fig:wavcoeff}, it seems reasonable to assume that the functions $\balpha_{\lambda} (\cdot)$ have the same degree of smoothness $s_{2}$ across different resolution levels.

\begin{figure}[htbp]
\centering
\subfigure[]
{ \includegraphics[width=7.5cm]{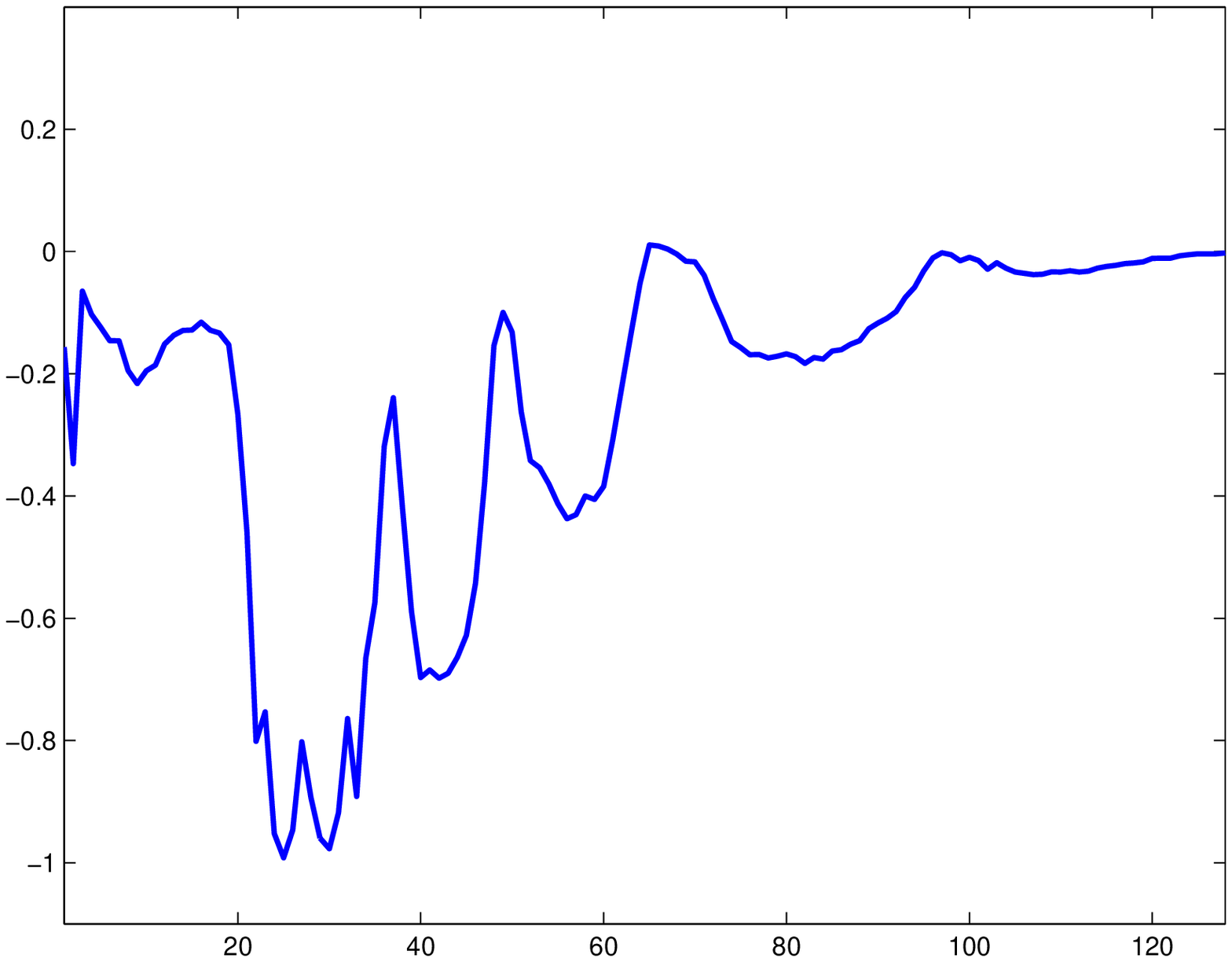} }
\subfigure[]
{ \includegraphics[width=7.5cm]{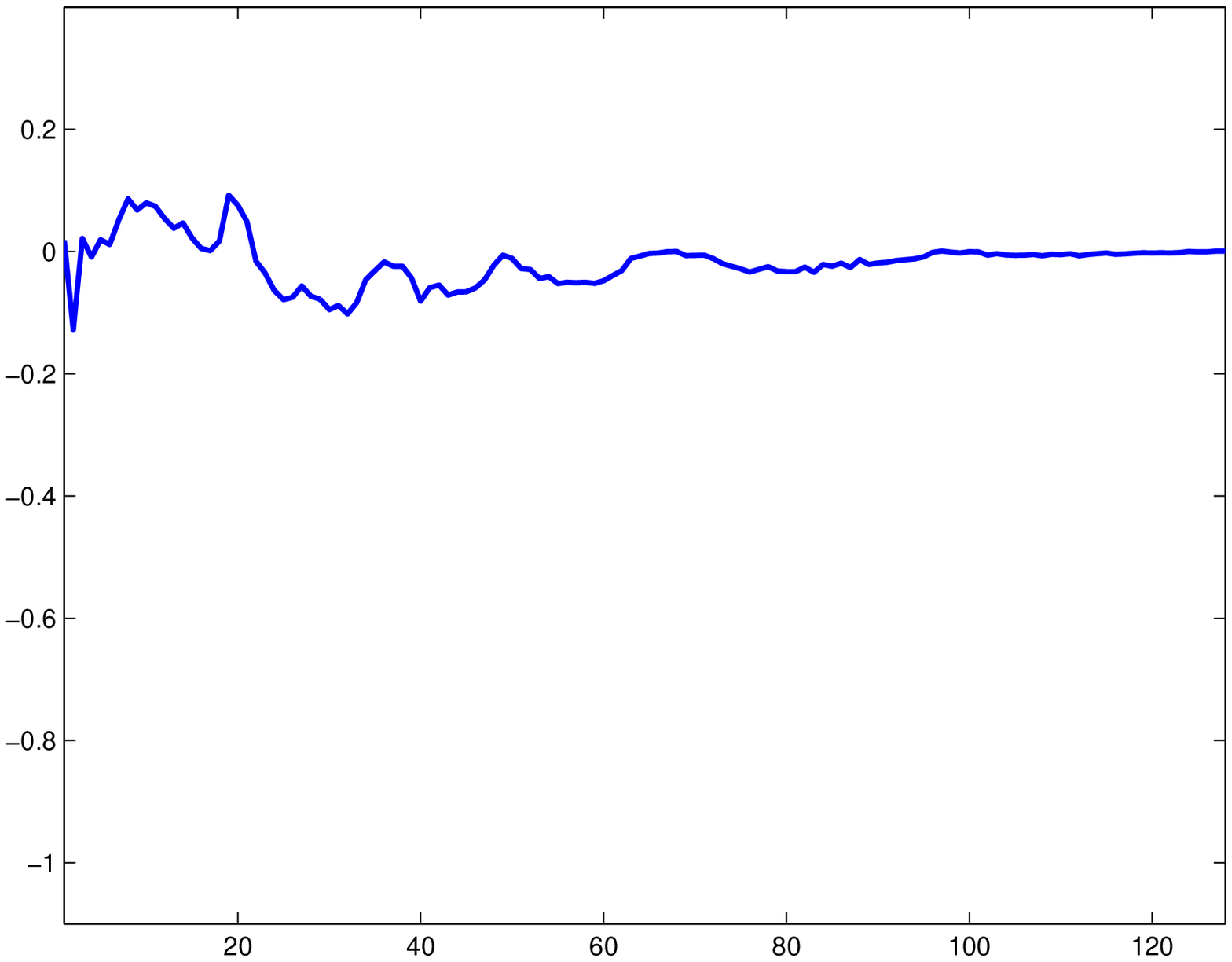} }

\caption{Satellite remote sensing image. Evolution of the curve  $t \mapsto \balpha_{\lambda} (t)$ for (a) a wavelet coefficient at resolution level $|\lambda| = 3$, and (b)  a wavelet coefficient at resolution level $|\lambda| = 5$.} \label{fig:wavcoeff}
\end{figure}

\medskip
In order to derive the minimax results, we define the minimax $L^2$-risk
over the class of balls $\bB_{p,q}^{s_{1},s_{2}}(A_{1},A_{2})$ as
\begin{eqnarray*}
\risk_{\epsilon}(\bB_{p,q}^{s_{1},s_{2}}(A_{1},A_{2})) &:=& \inf_{\hat{\bfun}_{\epsilon}} \sup_{\bfun \in  \bB_{p,q}^{s_{1},s_{2}}(A_{1},A_{2})} R(\hat{\bfun}_{\epsilon},\bfun),\\ &=& \inf_{\hat{\bfun}_{\epsilon}} \sup_{\bfun \in  \bB_{p,q}^{s_{1},s_{2}}(A_{1},A_{2})}  \EE\| \hat{\bfun}_{\epsilon} - \bfun\|^2 \\
&=& \inf_{\hat{\bfun}_{\epsilon}} \sup_{\bfun \in  \bB_{p,q}^{s_{1},s_{2}}(A_{1},A_{2})}   \bigg( \int_{T\times \XX}   \left|\hat{\bfun}_{\epsilon}(t,\underline{x}) - \bfun(t,\underline{x})\right|^2 dt d\underline{x} \bigg),
\end{eqnarray*}
where $\| {\bf g} \|$ is the $L^2$-norm of a function ${\bf g}$ defined on $T\times \XX$ and the
infimum is taken over all possible estimators $\hat{\bfun}_{\epsilon}$  
(i.e., measurable functions) of $\bfun$, based on observations from model \eqref{model:direct}.

\medskip
To present our results, for any $d \geq 1$ and any $s_{1} >0$ and $s_{2}>0$, we define $s>0$ to be such that
 \begin{equation} \label{def:s}
 \frac{1}{s} = \frac{1}{d+1}\left(\frac{d}{s_{1}} + \frac{1}{s_{2}}\right).
\end{equation}

\medskip
In what follows, we use the symbol $C$ for a generic positive
constant, independent of $\epsilon$, which may take different values at
different places. Moreover, in order to simplify the presentation of the results, and without loss of generality, we assume below that $j_{0} = m_{0} = 0$.

\section{Minimax lower bound for the $L^2$-risk}
\label{lowerF}

The following statement provides the minimax lower bounds for the $L^2$-risk.

\begin{theo} \label{theo:lowebound}
Let $A_{1} >0$ and $A_{2} >0$ be constants. Let $(A_{\lambda})_{\lambda \in \Lambda}$ be a set of positive constants satisfying (\ref{Alambda}), and assume that there exists a positive constant $A > 0$ such that, for any $-1 \leq j < + \infty$ and $ |\lambda| = j$,
\begin{equation}
A_{\lambda} 2^{\frac{d}{2}(j+1)} \geq A. \label{eq:condAlambda}
\end{equation}
Let $s_{1} > 0$  and $s_{2} > 0$ be the smoothness parameters in the space and time domains, respectively, such that $0 < s_{1} < \tau_{1}$ and  $0 < s_{2} < \tau_{2}$, where $\tau_1$ and $\tau_2$ are the regularity parameters of the wavelet systems $(\phi,\psi)$ and $(\tilde{\phi},\tilde{\psi})$, respectively. Assume that $1 \leq p_{1}, q_{1} \leq +\infty$, $1 \leq p_{2}, q_{2}  \leq +\infty$ such that $s_{1} + d( 1/2 - 1/p_{1} )  >0$ and  $s_{2} +  1/2 - 1/p_{2} >0$, and let $s > 0$ satisfy \eqref{def:s}. Then, there exists a constant $C > 0$ 
such that
$$
\risk_{\epsilon}(\bB_{p,q}^{s_{1},s_{2}}(A_{1},A_{2}))  \geq C\; \epsilon^{\frac{4s}{2s + d + 1}},
$$
for all sufficiently small $\epsilon > 0$.
\end{theo}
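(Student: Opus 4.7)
The plan is to derive the lower bound by the classical hypercube--plus--Assouad strategy, specifically adapted to the tensor-product structure of $\bB_{p,q}^{s_1,s_2}(A_1,A_2)$. The idea is to embed a $\{0,1\}^{N}$-cube of test functions into the ball, built from tensor products of wavelets supported at a single spatial scale $j^{*}$ and a single temporal scale $m^{*}$, and to choose $(j^{*},m^{*})$ so that the two Besov-type constraints saturate simultaneously; the anisotropic balance is precisely what will make the harmonic-mean exponent $s$ from \eqref{def:s} appear in the final rate.

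Concretely, for $\omega=(\omega_{\lambda,\ell})\in\{0,1\}^{N}$ (with $|\lambda|=j^{*}$, $\ell\in\{0,\dots,2^{m^{*}}-1\}$ and $N\asymp 2^{j^{*}d+m^{*}}$) and an amplitude $\gamma_\epsilon>0$ to be fixed, set
$$\bfun_{\omega}(t,\underline{x})=\gamma_\epsilon\sum_{|\lambda|=j^{*}}\sum_{\ell=0}^{2^{m^{*}}-1}\omega_{\lambda,\ell}\,\psi_\lambda(\underline{x})\,\tilde{\psi}_{m^{*},\ell}(t).$$
Orthonormality of the tensor-product basis gives $\|\bfun_\omega-\bfun_{\omega'}\|^2=\gamma_\epsilon^2\,H(\omega,\omega')$, where $H$ denotes the Hamming distance, while the cylindrical Gaussian noise in \eqref{model:direct} yields $\mathrm{KL}(P_\omega\|P_{\omega'})=\|\bfun_\omega-\bfun_{\omega'}\|^2/(2\epsilon^2)$; choosing $\gamma_\epsilon\asymp\epsilon$ thus keeps adjacent hypotheses statistically indistinguishable. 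To verify that every $\bfun_\omega$ lies in $\bB_{p,q}^{s_1,s_2}(A_1,A_2)$, the spatial constraint $\sup_{t\in T}\|\bfun_\omega(t,\cdot)\|^{s_1}_{p_1,q_1}\leq A_1$ is reduced, via the compact-support pointwise bound $\sup_t|\sum_\ell\omega_{\lambda,\ell}\tilde\psi_{m^{*},\ell}(t)|\lesssim 2^{m^{*}/2}$, to $\gamma_\epsilon\,2^{j^{*}(s_1+d/2)+m^{*}/2}\lesssim A_1$; the temporal constraint $\|\balpha_\lambda\|^{s_2}_{p_2,q_2}\leq A_\lambda$ reduces to $\gamma_\epsilon\,2^{m^{*}(s_2+1/2)}\lesssim A_\lambda$, which by the hypothesis \eqref{eq:condAlambda} is implied by $\gamma_\epsilon\,2^{m^{*}(s_2+1/2)+j^{*}d/2}\lesssim A$.

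Balancing the two saturation inequalities by imposing $2^{j^{*}s_1}\asymp 2^{m^{*}s_2}$ and using $\gamma_\epsilon\asymp\epsilon$, a short computation (whose exponent simplifies precisely through \eqref{def:s}) yields $2^{j^{*}d+m^{*}}\asymp\epsilon^{-2(d+1)/(2s+d+1)}$, and therefore $N\gamma_\epsilon^2\asymp\epsilon^{4s/(2s+d+1)}$. Assouad's lemma then delivers
$$\inf_{\hat\bfun_\epsilon}\sup_{\omega\in\{0,1\}^{N}}\EE\|\hat\bfun_\epsilon-\bfun_\omega\|^2\gtrsim N\gamma_\epsilon^2\asymp\epsilon^{4s/(2s+d+1)},$$
which, since $\{\bfun_\omega\}_{\omega}\subset\bB_{p,q}^{s_1,s_2}(A_1,A_2)$, gives the asserted bound on $\risk_\epsilon(\bB_{p,q}^{s_1,s_2}(A_1,A_2))$.

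I expect the main obstacle to be the anisotropic bookkeeping in the ball-membership check: the spatial and temporal constraints couple through the mixed factor $\gamma_\epsilon\,2^{m^{*}/2}$, which originates from the \emph{pointwise-in-$t$} nature of the spatial Besov norm (an averaged-in-$t$ norm would not produce this factor and would spoil the balance), and the per-$\lambda$ radius $A_\lambda$ on the temporal side --- rather than a single global radius --- forces genuine use of the lower bound \eqref{eq:condAlambda}. A secondary point worth noting is that the Besov parameters $(p_1,q_1,p_2,q_2)$ do not enter the final exponent; this is consistent with the fact that a single-scale block of wavelet coefficients has Besov norm scaling the same way in every $(p_i,q_i)$, so the constructed cube places the lower bound in the ``dense'' (linear) regime, which is all the theorem claims.
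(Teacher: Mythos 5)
Your proposal is correct and follows essentially the same route as the paper: an Assouad cube of tensor-product wavelet perturbations with amplitude $\asymp\epsilon$ (times the normalizing factor $2^{-j^{*}d/2-m^{*}/2}$), scales balanced via $2^{j^{*}s_{1}}\asymp 2^{m^{*}s_{2}}\asymp\epsilon^{-2s/(2s+d+1)}$ exactly as in \eqref{eq:j1m2-lb}, the same pointwise-in-$t$ compact-support bound for the spatial Besov constraint, and the same use of \eqref{eq:condAlambda} for the per-$\lambda$ temporal radii. The only cosmetic differences are that you place the cube at the single top scale pair $(j^{*},m^{*})$ with a $\{0,1\}$ indexing and invoke Assouad's lemma as a black box, whereas the paper sums over all scales up to $(j_{1},m_{2})$ with $\{\pm1\}$ signs and carries out the likelihood-ratio reduction by hand via Girsanov; neither change affects the rate.
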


\begin{proof}

The proof is based on the standard Assouad's cube technique (see, e.g., \cite{MR2724359}, Chapter 2, Section 2.7.2).  Consider the following test functions 
$$
 \bfun_{w}(t,\underline{x}) = \mu_{j_{1},m_{2}} \sum_{j = -1}^{j_{1}}  \sum_{|\lambda| = j}  \sum_{m=-1}^{m_{2}} \sum_{\ell = 0}^{2^{m}-1}    w_{\lambda,m,\ell} \tilde{\psi}_{m,\ell}(t) \psi_{\lambda}(\underline{x}), \quad t \in T, \quad \underline{x} \in \XX,
$$
where $w = \left( (w_{\lambda,m,\ell})_{|\lambda| = j, 0 \leq \ell \leq  2^{m}-1 }  \right)_{j=-1,\ldots,j_{1},m=-1,\ldots, m_{2}} \in \Omega := \{1,-1\}^{2^{j_{1} d + m_{2}}}$,  and $\mu_{j_{1},m_{2}}$ is a positive sequence of reals satisfying the condition
\begin{equation}
 \mu_{j_{1},m_{2}}  =  c 2^{-\frac{1}{2}(m_{2}+1)} 2^{-\frac{d}{2}(j_{1}+1)}   \min\left(  2^{-(j_{1}+1) s_{1} }, 2^{-(m_{2}+1)s_{2} } \right),\label{eq:condmu}
\end{equation}
for some constant $c > 0$  not depending on $j_{1}$ and $m_{2}$. Assume that $c$ satisfies the condition
\begin{equation}
c \leq \min\left(A_{1} / (  \|\tilde{\psi}_{-1} \|_{\infty}  +  K \|\tilde{\psi} \|_{\infty} ), A \right), \label{eq:condc}
\end{equation} 
where $A$ is the constant satisfying inequality \eqref{eq:condAlambda} and $K$ is a constant that is proportional  to the length support of $\tilde{\psi}$. Then, it easily follows that  $\bfun_{w} \in \bB_{p,q}^{s_{1},s_{2}}(A_{1},A_{2})$ for any $w \in \Omega$. Indeed, for any $t \in T$, 
\begin{eqnarray*}
 \|  \bfun_{w}(t,\cdot) \|^{s_{1}}_{p_{1},q_{1}} & = & \left( \sum_{j =-1}^{j_{1}} 2^{j(s_{1} + d( 1/2 - 1/p_{1})) q_{1}} \left( \sum_{ |\lambda|=j} |\langle  \bfun_{w}(t,\cdot) ,\psi_\lambda\rangle |^{p_{1}}\right)^{ q_{1}/p_{1}}\right) ^{1/q_{1}},
\end{eqnarray*}
where
\begin{eqnarray*}
|\langle  \bfun_{w}(t,\cdot) ,\psi_\lambda\rangle | & \leq & \mu_{j_{1},m_{2}} \left( \|\tilde{\psi}_{-1} \|_{\infty} +  \sum_{m=0}^{m_{2}} \sum_{\ell = 0}^{2^{m}-1}  |\tilde{\psi}_{m,\ell}(t) |\right).
\end{eqnarray*}
Let us define the set
$$
I_{m}(t) = \{0 \leq \ell \leq 2^{m}-1 \; : \;  \tilde{\psi}_{m,\ell}(t) \neq 0 \}.
$$
Since, the wavelet $\tilde{\psi}$ is compactly supported, one has that the cardinality of $I_{m}(t)$ is bounded by a  constant $K > 0$ that is proportional  to the length support of $\tilde{\psi}$. Thus, using the relation $ \|\tilde{\psi}_{\lambda} \|_{\infty} = 2^{m/2}  \|\tilde{\psi} \|_{\infty} $, we obtain that, for any $t \in T$,
\begin{eqnarray*}
|\langle  \bfun_{w}(t,\cdot) ,\psi_\lambda\rangle | & \leq & \mu_{j_{1},m_{2}} \left( \|\tilde{\psi}_{-1} \|_{\infty} +  \sum_{m=0}^{m_{2}} \sum_{\ell \in I_{m}(t) }   |\tilde{\psi}_{m,\ell}(t) |\right) \\
 & \leq &  \mu_{j_{1},m_{2}} \left( \|\tilde{\psi}_{-1} \|_{\infty} +   \sum_{m=0}^{m_{2}}  K 2^{m/2}  \|\tilde{\psi} \|_{\infty} \right) \\
 & \leq &  \mu_{j_{1},m_{2}} \left( \|\tilde{\psi}_{-1} \|_{\infty} +   K   \|\tilde{\psi} \|_{\infty} 2^{\frac{1}{2}(m_{2}+1)} \right)  \\
 & \leq &  \mu_{j_{1},m_{2}} \left( \|\tilde{\psi}_{-1} \|_{\infty} +   K   \|\tilde{\psi} \|_{\infty} \right) 2^{\frac{1}{2}(m_{2}+1)}.
\end{eqnarray*}
Therefore, by the definition of  $\mu_{j_{1},m_{2}}$ given in (\ref{eq:condmu}), it follows that
\begin{eqnarray}
\sup_{t \in T} \{ \|  \bfun_{w}(t,\cdot) \|^{s_{1}}_{p_{1},q_{1}} \}   & \leq  & (  \|\tilde{\psi}_{-1} \|_{\infty}  + K  \|\tilde{\psi} \|_{\infty} ) \mu_{j_{1},m_{2}} 2^{\frac{1}{2}(m_{2}+1)} 2^{ (j_{1}+1) (s_{1} + d/2) }   \nonumber  \\
   & \leq  &  c  (  \|\tilde{\psi}_{-1} \|_{\infty}  +  K \|\tilde{\psi} \|_{\infty} ). \label{eq:condc1}
\end{eqnarray}
Now, define, for each $\lambda \in \Lambda$ and $t \in T$,
$$\balpha_{w,\lambda}(t)  =  \mu_{j_{1},m_{2}} \sum_{m=-1}^{m_{2}} \sum_{\ell = 0}^{2^{m}-1}    w_{\lambda,m,\ell} \tilde{\psi}_{m,\ell}(t),
$$
with
$w = \left( (w_{\lambda,m,\ell})_{|\lambda| = j, 0 \leq \ell \leq  2^{m}-1 }  \right)_{j=-1,\ldots,j_{1},m=-1,\ldots, m_{2}}$ and $\mu_{j_{1},m_{2}}$ are as given above.
On noting that 
$$
|\langle  \balpha_{w,\lambda} ,\tilde{\psi}_{m,l}\rangle | \leq \mu_{j_{1},m_{2}} |w_{\lambda,m,l}|,
$$
and using the  definition of  $\mu_{j_{1},m_{2}}$ given in (\ref{eq:condmu})
 and the inequality \eqref{eq:condAlambda}, we obtain that, for any $|\lambda| = j$ with $-1 \leq j \leq j_{1}$,
\begin{eqnarray}
\| \balpha_{w,\lambda} \|^{s_{2}}_{p_{2},q_{2}}  & \leq &  \mu_{j_{1},m_{2}}  2^{(m_{2}+1)(s_{2} +  1/2)} \nonumber \\
& \leq & c 2^{-\frac{d}{2}(j_{1}+1)} \leq c A_{\lambda}/A.  \label{eq:condc2}
\end{eqnarray}
Hence, using the inequalities \eqref{eq:condc1} and \eqref{eq:condc2}, it follows that the condition  \eqref{eq:condc}
is sufficient to imply that  $\bfun_{w} \in \bB_{p,q}^{s_{1},s_{2}}(A_{1},A_{2})$. 

\medskip 
In the rest of the proof, we will thus assume that condition \eqref{eq:condc} holds. 
Furthermore, we use the notation $\EE_{\bfun_{w}}$ to denote expectation with respect to the distribution $\PP_{\bfun_{w}}$ of the random process $Y$ in model \eqref{model:direct}  under the hypothesis that $\bfun = \bfun_{w}$.

\medskip
The minimax risk $\risk_{\epsilon}(\bB_{p,q}^{s_{1},s_{2}}(A_{1},A_{2}))$ can be bounded from below as follows  
\begin{eqnarray*}
\risk_{\epsilon}:=\risk_{\epsilon}(\bB_{p,q}^{s_{1},s_{2}}(A_{1},A_{2})) & \geq &  \inf_{\hat{\bfun}_{\epsilon}} \sup_{w \in  \Omega} R(\hat{\bfun}_{\epsilon},\bfun_{w}) \\
& \geq &  \inf_{\hat{\bfun}_{\epsilon}} \sup_{w \in  \Omega}  \sum_{j = -1}^{j_{1}} \sum_{|\lambda| = j}  \sum_{m=-1}^{m_{2}} \sum_{\ell = 0}^{2^{m}-1} \EE_{\bfun_{w}} \left| \hat{\alpha}_{\lambda,m,\ell}^{\epsilon}  -  \mu_{j_{1},m_{2}}   w_{\lambda,m,\ell} \right|^2
\end{eqnarray*}
where
$$
 \hat{\alpha}_{\lambda,m,\ell}^{\epsilon} = \int_{T \times \XX}   \hat{\bfun}_{\epsilon}(t,\underline{x})  \tilde{\psi}_{m,\ell}(t) \psi_{\lambda}(\underline{x})   dt d\underline{x}.
$$
Then, define
$$
 \hat{w}_{\lambda,m,\ell}^{\epsilon} := \argmin_{v \in \{-1,1\}} \left| \hat{\alpha}_{\lambda,m,\ell}^{\epsilon}  - \mu_{j_{1},m_{2}}  v \right|,
$$
and  remark that the triangular inequality and the definition of $ \hat{w}_{\lambda,m,\ell}^{\epsilon}$ imply that
$$
\mu_{j_{1},m_{2}} \left| \hat{w}_{\lambda,m,\ell}^{\epsilon} -    w_{\lambda,m,\ell} \right| \leq 2 \left| \hat{\alpha}_{\lambda,m,\ell}^{\epsilon}  -  \mu_{j_{1},m_{2}}   w_{\lambda,m,\ell} \right|,
$$
which yields
\begin{eqnarray*}
\risk_{\epsilon} & \geq &  \inf_{\hat{\bfun}_{\epsilon}} \sup_{w \in  \Omega}  \frac{\mu_{j_{1},m_{2}}^2}{4}   \sum_{j = -1}^{j_{1}} \sum_{|\lambda| = j}  \sum_{m=-1}^{m_{2}} \sum_{\ell = 0}^{2^{m}-1} \EE_{\bfun_{w}} \left|  \hat{w}_{\lambda,m,\ell}^{\epsilon}  -      w_{\lambda,m,\ell} \right|^2 \\
& \geq &  \inf_{\hat{\bfun}_{\epsilon}}    \frac{\mu_{j_{1},m_{2}}^2}{4}  \frac{1}{\# \Omega} \sum_{w \in  \Omega} \sum_{j = -1}^{j_{1}} \sum_{|\lambda| = j}  \sum_{m=-1}^{m_{2}} \sum_{\ell = 0}^{2^{m}-1}   \EE_{\bfun_{w}} \left|  \hat{w}_{\lambda,m,\ell}^{\epsilon}  -      w_{\lambda,m,\ell} \right|^2.
\end{eqnarray*}

Replacing the sums $\sum_{j = -1}^{j_{1}} \sum_{|\lambda| = j}  \sum_{m=-1}^{m_{2}} \sum_{\ell = 0}^{2^{m}-1}$ by $\sum_{\lambda,m,\ell}$ (to simplify the notation),  for any $\lambda,m,\ell$ and $w \in \Omega$, define the vector $w^{(\lambda,m,\ell)} \in \Omega$ having all its components equal to $w$ expect the $(\lambda,m,\ell)$-th element. Let $\# A$ denote the cardinality of a finite set $A$. Then
\begin{eqnarray*}
\risk_{\epsilon} & \geq &  \inf_{\hat{\bfun}_{\epsilon}}    \frac{\mu_{j_{1},m_{2}}^2}{4}  \frac{1}{\# \Omega} \sum_{\lambda,m,\ell}  \sum_{w \in  \Omega \; : \;  w_{\lambda,m,\ell}  = 1} \left(  \EE_{\bfun_{w}} \left|  \hat{w}_{\lambda,m,\ell}^{\epsilon}  -       w_{\lambda,m,\ell} \right|^2 +  \EE_{\bfun_{w^{(\lambda,m,\ell)}}} \left|  \hat{w}_{\lambda,m,\ell}^{\epsilon}  -      w_{\lambda,m,\ell}^{(\lambda,m,\ell)} \right|^2 \right)  \\
 & \geq &  \inf_{\hat{\bfun}_{\epsilon}}    \frac{\mu_{j_{1},m_{2}}^2}{4}  \frac{1}{\# \Omega} \sum_{\lambda,m,\ell}  \sum_{w \in  \Omega \; : \;  w_{\lambda,m,\ell}  = 1}  \EE_{\bfun_{w}} \left(  \left|  \hat{w}_{\lambda,m,\ell}^{\epsilon}  -       w_{\lambda,m,\ell} \right|^2 +   \left|  \hat{w}_{\lambda,m,\ell}^{\epsilon}  -       w_{\lambda,m,\ell}^{(\lambda,m,\ell)} \right|^2 \frac{d \PP_{\bfun_{w^{(\lambda,m,\ell)}}}}{d \PP_{\bfun_{w}}} (Y)  \right).
\end{eqnarray*}
Since $ w_{\lambda,m,\ell}^{(\lambda,m,\ell)} = - w_{\lambda,m,\ell} $ and $\hat{w}_{\lambda,m,\ell}^{\epsilon}  \in \{-1,1\}$, one finally obtains that, for any $0 < \delta < 1$,
\begin{eqnarray}
\risk_{\epsilon} & \geq &  \mu_{j_{1},m_{2}}^2   \frac{1}{\# \Omega} \sum_{\lambda,m,\ell}  \sum_{w \in  \Omega \; : \;  w_{\lambda,m,\ell}  = 1}  ^{2}  \;\EE_{\bfun_{w}} \left(  \min\left(1,   \frac{d \PP_{\bfun_{w^{(\lambda,m,\ell)}}}}{d \PP_{\bfun_{w}}} (Y) \right)   \right) \nonumber \\
 & \geq &  \delta \mu_{j_{1},m_{2}}^2   \frac{1}{\# \Omega} \sum_{\lambda,m,\ell}  \sum_{w \in  \Omega \; : \;  w_{\lambda,m,\ell}  = 1}  ^{2}\;  \PP_{\bfun_{w}} \left(   \frac{d \PP_{\bfun_{w^{(\lambda,m,\ell)}}}}{d \PP_{\bfun_{w}}} (Y) > \delta  \right). \label{eq:lowerdelta}
\end{eqnarray}
Thanks to the multiparameter Girsanov's formula, one has that, under the hypothesis that $\bfun = \bfun_{w}$  in model \eqref{model:direct}, 
$$
\log \left(  \frac{d \PP_{\bfun_{w^{(\lambda,m,\ell)}}}}{d \PP_{\bfun_{w}}} (Y) \right) = \epsilon^{-1} \int_{T\times \XX}   (\bfun_{w^{(\lambda,m,\ell)}} - \bfun_{w}   )  (t,\underline{x})   d W(t,\underline{x})  - \frac{\epsilon^{-2}}{2} \int_{T\times \XX}   (\bfun_{w^{(\lambda,m,\ell)}} - \bfun_{w}   )^2 (t,\underline{x})   dt d\underline{x}
$$
Therefore, the random variable 
$$
Z_{\lambda,m,\ell} := \log \left(  \frac{d \PP_{\bfun_{w^{(\lambda,m,\ell)}}}}{d \PP_{\bfun_{w}}} (Y) \right)
$$ 
is Gaussian with mean $\theta = -  \frac{\epsilon^{-2}}{2}  \mu_{j_{1},m_{2}}^2$ and variance $\sigma^2 =  \epsilon^{-2}   \mu_{j_{1},m_{2}}^2$, that do not  depend on $(\lambda,m,\ell)$.

\medskip
Now, let $s > 0$ satisfy  \eqref{def:s}.  Define $j_{1} = j_{1}(\epsilon)$ and $m_{2} = m_{2}(\epsilon)$ as
\begin{equation}
\label{eq:j1m2-lb}
2^{( j_{1}(\epsilon)+1)}=\lfloor \epsilon^{-\frac{2 s}{ (2s + d + 1)s_{1}}} \rfloor
\quad \mbox{and} \quad
2^{(m_{2}(\epsilon)+1)} =\lfloor \epsilon^{-\frac{2 s}{ (2s + d + 1)s_{2}}} \rfloor.
\end{equation}
Thanks to \eqref{eq:condmu}, it follows that there exists $c_{1} > 0$ such that
$$
\epsilon^{-2}   \mu_{j_{1}(\epsilon),m_{2}(\epsilon)}^2 \leq c_{1}
$$
for all sufficiently small $\epsilon > 0$. Hence, $Z_{\lambda,m,\ell} \sim N(\theta,\sigma^2)$ with $|\theta| \leq c_{1}/2$ and $\sigma^2 \leq c_{1}$  which implies that there exist  $\gamma > 0$ and $0 < \delta < 1$, that do not depend on  $(\lambda,m,\ell)$ and $\epsilon$, such that for all sufficiently small $\epsilon > 0$
$$
\PP_{\bfun_{w}} \left(   \frac{d \PP_{\bfun_{w^{(\lambda,m,\ell)}}}}{d \PP_{\bfun_{w}}} (Y)  > \log(\delta) \right) \geq \gamma.
$$
Hence, inserting the above inequality into \eqref{eq:lowerdelta}, it implies that
$$
\risk_{\epsilon} \geq  \frac{1}{2} \delta \gamma    \mu_{j_{1}(\epsilon),m_{2}(\epsilon)}^2 \;   2^{( j_{1}(\epsilon) +1)d + (m_{2}(\epsilon)+1)}.
$$
Using the expressions of $j_{1}(\epsilon)$ and $m_{2}(\epsilon)$ given in \eqref{eq:j1m2-lb}, together with \eqref{def:s} and \eqref{eq:condmu}, we finally obtain that there exists a constant $C > 0$, that does not depend on $\epsilon$, such that
$$
\risk_{\epsilon} \geq C \epsilon^{\frac{4 s}{ 2s + d + 1}},
$$
for all sufficiently small $\epsilon >0$, thus completing the proof of the theorem.

\end{proof}

\section{Minimax upper bound for the $L^2$-risk}
\label{upperF}

We now provide minimax upper bounds for the $L^2$-risk. This will accomplished by constructing appropriate estimators of $\bfun(t,\underline{x})$, $t \in T$, $\underline{x} \in \XX$, in the sequence space model.

\subsection{The sequence space model}
\label{subsec:ssm}

The suggested estimators in the following sections, will be constructed on the sequence space. Let us first recall that (see, e.g.,  \cite{MR1856685}) \eqref{model:direct} must be understood in the following sense: for any $\bg \in \LL^{2}(T\times \XX )$, 
$$
\int_{T\times \XX} \bg(t,\underline{x}) d Y(t,\underline{x}) = \int_{T\times \XX} \bg(t,\underline{x}) \bfun(t,\underline{x}) dt d\underline{x} + \epsilon \int_{T \times \XX} \bg(t,\underline{x}) d W(t,\underline{x})
$$
so that the integrand of ``the data'' $d Y(t,\underline{x})$ with respect to $ \bg(t,\underline{x})$ is a random variable that is normally distributed with mean 
$$
\EE  \left( \int_{T\times \XX} \bg(t,\underline{x}) d Y(t,\underline{x})  \right) = \int_{T\times \XX} \bg(t,\underline{x}) \bfun(t,\underline{x}) dt d\underline{x}
$$  
and variance 
$$
\var  \left( \int_{T\times \XX} \bg(t,\underline{x}) d Y(t,\underline{x})  \right) = \epsilon^{2} \int_{T \times \XX} |\bg(t,\underline{x})|^{2} dt d\underline{x}.
$$
Moreover, for any $g_{1}, g_{2} \in \LL^{2}(T\times \XX )$ 
$$
\EE  \left( \int_{T \times \XX} \bg_{1}(t,\underline{x}) d W(t,\underline{x})  \int_{T \times \XX} \bg_{2}(t,\underline{x}) d W(t,\underline{x}) \right)= \int_{T \times \XX} g_{1}(t,\underline{x}) g_{2}(t,\underline{x}) dt d\underline{x}.
$$

Hence, in view of the above and using the tensor product wavelet basis constructed in Section \ref{sec:wb}, noisy observations of the coefficients $\tilde{\alpha}_{\lambda,m,\ell}$ are thus obtained through the following sequence model 
\begin{eqnarray}
y_{\lambda,m,\ell} &=& \int_{T \times \XX}  \tilde{\psi}_{m,\ell}(t) \psi_{\lambda}(\underline{x}) d Y(t,\underline{x}) \\ \nonumber &=& \tilde{\alpha}_{\lambda,m,\ell} + \epsilon \;z_{\lambda,m,\ell}, \; \lambda \in \Lambda, \; m \geq -1, \; \ell = 0,1,\ldots,2^{m}-1, \label{seqspam}
\end{eqnarray}
where the $z_{\lambda,m,\ell}$'s are independent and identically distributed (i.i.d.) standard Gaussian random variables, i.e., Gaussian random variables with zero mean and variance 1.

\subsection{Linear and non-adaptive estimator}
Consider the sequence space model \eqref{seqspam}. Let $j_{1} >0$ and $m_{2} >0$ be integers (smoothing parameters). We consider the following  non-adaptive wavelet projection (linear) estimator of $\bfun(t,\underline{x})$, $t \in T$, $\underline{x} \in \XX$, that is
\begin{equation}
\label{linearestF}
\hat{\bfun}_{j_{1},m_{2}}^{l}(t,\underline{x}) = \sum_{j = -1}^{j_{1}} \sum_{|\lambda| = j}  \sum_{m=-1}^{m_{2}} \sum_{\ell = 0}^{2^{m}-1} y_{\lambda,m,\ell} \tilde{\psi}_{m,\ell}(t) \psi_{\lambda}(\underline{x}), \quad t \in T, \quad \underline{x} \in \XX.
\end{equation}
Define the $L^2$-risk of $\hat{\bfun}_{j_{1},m_{2}}^{l}$ as
\begin{eqnarray*}
R(\hat{\bfun}_{j_{1},m_{2}}^{l},\bfun) &=&\EE \| \hat{\bfun}_{j_{1},m_{2}}^{l}  - \bfun \|^{2}_{\LL^{2}(T\times \XX)} \\
&=& \EE \bigg( \int_{T\times \XX}   \left|\hat{\bfun}_{j_{1},m_{2}}^{l}(t,\underline{x})  - \bfun(t,\underline{x})\right|^2 dt d\underline{x} \bigg).
\end{eqnarray*}

The following statement provides the minimax upper bounds for the $L^2$-risk of the non-adaptive (linear) wavelet estimator $\hat{\bfun}_{j_{1},m_{2}}^{l}$ given in \eqref{linearestF}.

\begin{theo} \label{theo:lin}
Let $A_{1} >0$ and $A_{2} >0$ be constants. Let $s_{1} > 0$  and $s_{2} > 0$ be the smoothness parameters in the space and time domains, respectively, such that $0 < s_{1} < \tau_{1}$ and  $0 < s_{2} < \tau_{2}$, where $\tau_1$ and $\tau_2$ are the regularity parameters of the wavelet systems $(\phi,\psi)$ and $(\tilde{\phi},\tilde{\psi})$, respectively. Assume that $2 \leq p_{1}, q_{1} \leq +\infty$, $2 \leq p_{2}, q_{2}  \leq +\infty$ such that $s_{1} + d( 1/2 - 1/p_{1} ) > 0$ and  $s_{2} +  1/2 - 1/p_{2} > 0$, and let $s > 0$ satisfy \eqref{def:s}. 
Consider the linear estimator $\hat{\bfun}_{j_{1},m_{2}}^{l}$ given in \eqref{linearestF}, and define $j_{1} = j_{1}(\epsilon)$ and $m_{2} = m_{2}(\epsilon) $
such that
\begin{equation}
\label{eq:j1m1}
2^{(j_{1}(\epsilon)+1)} = \lfloor \epsilon^{-\frac{2 s}{ (2s + d + 1)s_{1}}} \rfloor \quad \mbox{and} \quad 2^{(m_{2}(\epsilon)+1)} =  \lfloor   \epsilon^{-\frac{2 s}{ (2s + d + 1)s_{2}}} \rfloor.
\end{equation} 
 Then, there exists a constant $C > 0$ 
 such that
$$
\sup_{\bfun \in \bB_{p,q}^{s_{1},s_{2}}(A_{1},A_{2})} R(\hat{\bfun}_{j_{1}(\epsilon),m_{2}(\epsilon)},\bfun) \leq C \; \epsilon^{\frac{4s}{2s + d + 1}},
$$
for all sufficiently small $\epsilon > 0$.
\end{theo}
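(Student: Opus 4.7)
The plan is to follow the classical bias-variance decomposition in the sequence space model \eqref{seqspam}. Because the tensor-product wavelet basis $\{\tilde{\psi}_{m,\ell}(t)\psi_{\lambda}(\underline{x})\}$ is orthonormal in $\LL^2(T\times\XX)$, Parseval's identity gives, with independence of the $z_{\lambda,m,\ell}$'s,
\begin{equation*}
R(\hat{\bfun}^{l}_{j_{1},m_{2}},\bfun)
= \underbrace{\sum_{j = -1}^{j_{1}}\sum_{|\lambda|=j}\sum_{m=-1}^{m_{2}}\sum_{\ell=0}^{2^m-1}\epsilon^{2}}_{\text{variance}}
\; + \; \underbrace{\sum_{(\lambda,m,\ell)\notin\mathcal{I}_{j_{1},m_{2}}}\tilde{\alpha}_{\lambda,m,\ell}^{2}}_{\text{bias}},
\end{equation*}
where $\mathcal{I}_{j_{1},m_{2}}$ is the set of indices kept by $\hat{\bfun}_{j_{1},m_{2}}^{l}$. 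The variance term is exactly $\epsilon^{2}\,2^{(j_{1}+1)d}\,2^{m_{2}+1}$, and plugging in \eqref{eq:j1m1} together with the arithmetic identity $\frac{d}{s_{1}}+\frac{1}{s_{2}}=\frac{d+1}{s}$ from \eqref{def:s}, one checks (exactly as in the lower-bound proof at the level of $\mu_{j_1,m_2}^2 \cdot 2^{(j_1+1)d+(m_2+1)}$) that this variance is $\bigO{\epsilon^{\frac{4s}{2s+d+1}}}$. This is a routine calculation I would not belabor.

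The bias is split into two regions: (A) $|\lambda|>j_{1}$ (all $m,\ell$), and (B) $|\lambda|\leq j_{1}$ with $m>m_{2}$. For (A), I interchange summations to obtain $\sum_{|\lambda|>j_{1}}\int_T |\balpha_\lambda(t)|^2\,dt$. The assumption $\bfun(t,\cdot)\in B^{s_{1}}_{p_{1},q_{1}}(A_{1})$ uniformly in $t$, combined with $p_{1},q_{1}\geq 2$, lets me invoke the Besov embedding $B^{s_{1}}_{p_{1},q_{1}}\hookrightarrow B^{s_{1}}_{2,2}$ on the periodic domain (equivalently, apply H\"older on the finite set $\{|\lambda|=j\}$ of cardinality $2^{jd}$: $\|\cdot\|_{\ell^2}\leq 2^{jd(1/2-1/p_1)}\|\cdot\|_{\ell^{p_1}}$, and then a similar step over $j$ since $q_1\geq 2$). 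This yields $\sum_{j}2^{2js_{1}}\sum_{|\lambda|=j}|\balpha_{\lambda}(t)|^{2}\leq C A_{1}^{2}$ uniformly in $t$, so that $\sum_{|\lambda|>j_{1}}|\balpha_\lambda(t)|^2\leq C A_{1}^{2}\,2^{-2j_{1}s_{1}}$. Integrating over $T$ and using $2^{j_{1}+1}\asymp\epsilon^{-2s/((2s+d+1)s_{1})}$ gives $\bigO{\epsilon^{\frac{4s}{2s+d+1}}}$.

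For region (B), I fix $\lambda$ and use the assumption $\|\balpha_\lambda\|^{s_2}_{p_2,q_2}\leq A_\lambda$. By the same embedding argument (this time on the time side, using $p_{2},q_{2}\geq 2$), $\sum_{m}2^{2ms_{2}}\sum_\ell |\tilde{\alpha}_{\lambda,m,\ell}|^{2}\leq C A_{\lambda}^{2}$, so $\sum_{m>m_{2}}\sum_\ell\tilde{\alpha}_{\lambda,m,\ell}^{2}\leq CA_{\lambda}^{2}\,2^{-2m_{2}s_{2}}$. Summing over $|\lambda|\leq j_{1}$ and using constraint \eqref{Alambda}, this is bounded by $C A_{2}^{2}\,2^{-2m_{2}s_{2}}$, which with $2^{m_{2}+1}\asymp\epsilon^{-2s/((2s+d+1)s_{2})}$ again yields $\bigO{\epsilon^{\frac{4s}{2s+d+1}}}$. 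Summing variance plus the two bias contributions completes the proof.

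\noindent\textbf{Anticipated obstacle.} The only nontrivial step is justifying the Besov embeddings $B^{s_1}_{p_1,q_1}\hookrightarrow B^{s_1}_{2,2}$ and $B^{s_2}_{p_2,q_2}\hookrightarrow B^{s_2}_{2,2}$ cleanly under the given assumptions $p_i,q_i\geq 2$ (including the endpoint cases $p_i=\infty$ or $q_i=\infty$, where the inner sums become maxima); this is standard on the torus but should be stated carefully. The bookkeeping for the $A_\lambda$ weights in region (B) is the one place where the specific condition \eqref{Alambda} (rather than $\sup_\lambda A_\lambda\leq A_2$) is crucial and deserves explicit verification.
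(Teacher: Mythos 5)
Your proposal is correct and follows essentially the same route as the paper: the identical bias--variance decomposition with the variance term $\epsilon^{2}2^{(j_{1}+1)d+m_{2}+1}$, the same two-region split of the bias, the same level-wise Besov embedding bounds (which the paper isolates as Lemma \ref{lemma:approx}), the same use of \eqref{Alambda} to sum the $A_{\lambda}^{2}$ weights in region (B), and the same final substitution of \eqref{eq:j1m1}. The only difference is cosmetic: you sketch the H\"older-inequality proof of the embedding inline, whereas the paper cites it as a standard property.
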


\begin{proof}
Let us write the usual bias-variance decomposition of the $L^2$-risk as
$$
R(\hat{\bfun}_{j_{1},m_{2}}^{l},\bfun) = B(\hat{\bfun}_{j_{1},m_{2}}^{l},\bfun) + V(\hat{\bfun}_{j_{1},m_{2}}^{l}),
$$
with 
$$
B(\hat{\bfun}_{j_{1},m_{2}}^{l},\bfun) =  \| \EE \hat{\bfun}_{j_{1},m_{2}}^{l}  - \bfun \|^{2}  
\quad \mbox{and} \quad V(\hat{\bfun}_{j_{1},m_{2}}^{l}) = \EE \| \hat{\bfun}_{j_{1},m_{2}}^{l}  - \EE \hat{\bfun}_{j_{1},m_{2}}^{l} \|^{2}.
$$
Obviously,
\begin{eqnarray}
V(\hat{\bfun}_{j_{1},m_{2}}^{l}) &=& \epsilon^{2} \sum_{j = -1}^{j_{1}} \sum_{|\lambda| = j}  \sum_{m=-1}^{m_{2}} \sum_{\ell = 0}^{2^{m}-1}  \EE |z_{\lambda,m,\ell}|^2 \nonumber \\
&=& \epsilon^{2} 2^{ (j_{1} +1) d + m_{2} + 1}, \label{varlin} 
\end{eqnarray}
and
$$
B(\hat{\bfun}_{j_{1},m_{2}}^{l},\bfun) = B_{1}(\hat{\bfun}_{j_{1},m_{2}}^{l},\bfun)  + B_{2}(\hat{\bfun}_{j_{1},m_{2}}^{l},\bfun),
$$
where
\begin{eqnarray*}
B_{1}(\hat{\bfun}_{j_{1},m_{2}}^{l},\bfun) & = & \sum_{j = j_{1}+1}^{+\infty} \sum_{|\lambda| = j}    \sum_{m=-1}^{+ \infty} \sum_{\ell = 0}^{2^{m}-1} \left|\tilde{\alpha}_{\lambda,m,\ell}\right|^2 \\
& = &   \sum_{j = j_{1}+1}^{\infty} \sum_{|\lambda| = j}    \int_{T} \left| {\balpha}_{\lambda}(t) \right|^2 dt   \\
& = &   \int_{T} \sum_{j = j_{1}+1}^{\infty} \sum_{|\lambda| = j}  \left| {\balpha}_{\lambda}(t) \right|^2 dt \\
\end{eqnarray*}
and
\begin{eqnarray*}
B_{2}(\hat{\bfun}_{j_{1},m_{2}}^{l},\bfun) & = & \sum_{j = -1}^{j_{1}} \sum_{|\lambda| = j}  \sum_{m=m_{2}+1}^{+ \infty} \sum_{\ell = 0}^{2^{m}-1} \left|\tilde{\alpha}_{\lambda,m,\ell}\right|^2 \\
& \leq &  \sum_{j = -1}^{+ \infty} \sum_{|\lambda| = j}  \sum_{m=m_{2}+1}^{+ \infty} \sum_{\ell = 0}^{2^{m}-1} \left|\tilde{\alpha}_{\lambda,m,\ell}\right|^2. 
\end{eqnarray*}

By  Lemma \ref{lemma:approx}, there exists a constant $K_{2} > 0$ (only depending on $s_{2}$ and $p_{2}$) such that
$$
 \sum_{m=m_{2}+1}^{+ \infty} \sum_{\ell = 0}^{2^{m}-1} \left|\tilde{\alpha}_{\lambda,m,\ell}\right|^2 \leq K_{2} A_{\lambda}^2 2^{-2(m_{2}+1)s_{2}}. 
$$
Thus, using \eqref{Alambda}, it follows that
\begin{equation} \label{B1}
B_{2}(\hat{\bfun}_{j_{1},m_{2}}^{l},\bfun) \leq K_{2} A_{2}^2 2^{-2(m_{2}+1)s_{2}}.
\end{equation}

Moreover,  by  Lemma \ref{lemma:approx}, there exists a constant $K_{1} > 0$ (only depending on $s_{1}$ and $p_{1}$) such that  
$$
 \sum_{j = j_{1}+1}^{+\infty} \sum_{|\lambda| = j}  \left| {\balpha}_{\lambda}(t) \right|^2 \leq K_{1} A_{1}^2 2^{-2(j_{1}+1)s_{1}},
 $$
This implies that
\begin{equation} \label{B2}
B(\hat{\bfun}_{j_{1},m_{2}}^{l},\bfun) \leq K_{1} A_{1}^2 2^{-2(j_{1}+1)s_{1}} + K_{2} A_{2}^2 2^{-2(m_{2}+1)s_{2}}.
\end{equation}
Therefore, by combining \eqref{varlin}, \eqref{B1} and \eqref{B2}, we arrive at
$$
R(\hat{\bfun}_{j_{1}(\epsilon),m_{2}(\epsilon)},\bfun) \leq K_{1} A_{1}^2 2^{-2 (j_{1}(\epsilon) +1)s_{1}} + K_{2} A_{2}^2 2^{-2(m_{2}(\epsilon)+1)s_{2}} + \epsilon^{2} 2^{ (j_{1}(\epsilon)+1) d + (m_{2}(\epsilon) +1)}.
$$
By taking into account the expressions of $j_{1}(\epsilon)$ and $m_{2}(\epsilon)$ given in \eqref{eq:j1m1}, together with \eqref{def:s}, 
we finally obtain that there exists a constant $C > 0$, that does not depend on $\epsilon$, such that
$$
\sup_{\bfun \in \bB_{p,q}^{s_{1},s_{2}}(A_{1},A_{2})} R(\hat{\bfun}_{j_{1}(\epsilon),m_{2}(\epsilon)},\bfun) \leq C \; \epsilon^{\frac{4s}{2s + d + 1}},
$$
for all sufficiently small $\epsilon > 0$, thus completing the proof of the theorem.
\end{proof}

The choice of the resolution levels $j_{1}$ and $m_{2}$ depends on the unknown smoothness parameters $s_{1}$ and $s_{2}$ in the space and time domains, respectively. The linear estimator $\hat{\bfun}_{j_{1},m_{2}}^{l}$ defined in \eqref{linearestF} is thus called non-adaptive (with respect to $s_1$ and $s_2$) and is  of limited interest in practical applications. Moreover, the results of Theorem \ref{theo:lin} are only suited to model  $d$-dimensional functions $\bfun(t,\cdot)$ belonging to the space $B^{s_{1}}_{p_{1},q_{1}}(A_{1})$ with $2 \leq p_{1},q_{1} \leq +\infty$, uniformly over $t \in T$. However, such Besov spaces are not suited to model spatially inhomogeneous multivariate functions.  

\medskip
In the following section, we thus consider the problem of constructing an adaptive non-linear estimator that is optimal (in the minimax sense) 
over Besov balls $\bB_{p,q}^{s_{1},s_{2}}(A_{1},A_{2}) $ with $1 \leq p_{1},q_{1} \leq  +\infty$ and $1 \leq p_{2},q_{2} \leq +\infty$.

\subsection{Non-linear and adaptive estimation} 
\label{sec:adapt}
Consider the sequence space model \eqref{seqspam}. For each $\lambda \in \Lambda$, we divide the wavelet coefficients $\alpha_{\lambda,m,\ell}$ at each resolution level $-1 \leq m < +\infty$ into blocks of length $L_{\epsilon} = 1 + \lfloor \log(\epsilon^{-2})  \rfloor$. Let $A_{m}$ and $U_{mr}$ be the following sets of indices
\begin{eqnarray*}
A_{m} & = & \left\{ r \; | \; r=1,2,\ldots,\frac{2^{m}}{ L_{\epsilon} } \right\}, \\
U_{mr} & = & \left\{ \ell \; | \; \ell=0,1,\ldots,2^{m}-1; (r-1)  L_{\epsilon} \leq \ell \leq r  L_{\epsilon} - 1 \right\}.
\end{eqnarray*}
Now, we define
\begin{equation}
B_{\lambda,m,r} = \sum_{\ell \in U_{mr} } \alpha_{\lambda,m,\ell}^{2} \quad \mbox{and} \quad  \hat{B}_{\lambda,m,r} = \sum_{\ell \in U_{mr} } y_{\lambda,m,\ell}^{2}.
\end{equation}
We consider an adaptive wavelet block-thresholding (non-linear) estimator $\bfun(t,\underline{x})$, $t \in T$, $\underline{x} \in \XX$, that is
\begin{equation}
\label{nonlinearestF}
\hat{\bfun}_{j_{1},m_{2}}^{nl}(t,\underline{x}) = \sum_{j = -1}^{j_{1}} \sum_{|\lambda| = j}  \sum_{m=-1}^{m_{2}} \sum_{r \in A_{m}} \sum_{\ell \in U_{mr}} y_{\lambda,m,\ell} \1_{\left\{ \hat{B}_{\lambda,m,r}  \geq t_{\epsilon, \delta} \right\}} \tilde{\psi}_{m,\ell}(t) \psi_{\lambda}(\underline{x}),  \quad t \in T, \quad \underline{x} \in \XX,
\end{equation}
where $\1_{A}$ is the indicator function of the set $A$, and the resolution levels $j_{1}$ and $m_{2}$, and the threshold  $ t_{\epsilon, \delta} $, will be defined below.

\medskip
Define the $L^2$-risk of $\hat{\bfun}_{j_{1},m_{2}}^{nl}$ as
\begin{eqnarray*}
R(\hat{\bfun}_{j_{1},m_{2}}^{nl},\bfun) &=&\EE \| \hat{\bfun}_{j_{1},m_{2}}^{nl}  - \bfun \|^{2}_{\LL^{2}(T\times \XX)} \\
&=& \EE \bigg( \int_{T\times \XX}   \left|\hat{\bfun}_{j_{1},m_{2}}^{nl}(t,\underline{x})  - \bfun(t,\underline{x})\right|^2 dt d\underline{x} \bigg).
\end{eqnarray*}

The following statement provides the minimax upper bounds for the $L^2$-risk of the adaptive (non-linear) wavelet estimator $\hat{\bfun}_{j_{1},m_{2}}^{nl}$ given in \eqref{nonlinearestF}.

\begin{theo} \label{theo:adapt}
Let $A_{1} >0$ and $A_{2} >0$ be constants. Let $s_{1} > 0$  and $s_{2} > 0$ be the smoothness parameters in the space and time domains, respectively, such that $0 < s_{1} < \tau_{1}$ and  $0 < s_{2} < \tau_{2}$, where $\tau_1$ and $\tau_2$ are the regularity parameters of the wavelet systems $(\phi,\psi)$ and $(\tilde{\phi},\tilde{\psi})$, respectively.  Assume that $1 \leq p_{1}, q_{1} \leq +\infty$, $1 \leq p_{2}, q_{2}  \leq +\infty$ such that $s_{1} + d( 1/2 - 1/p_{1} )  >0$ and  $s_{2} +  1/2 - 1/p_{2} >0$ if $ 2 \leq p_{1}, q_{1} \leq + \infty$ and $ 2 \leq p_{2}, q_{2} \leq + \infty$, respectively, and  $s_{1} \geq  d/p_{1} $ and  $s_{2} \geq 1/p_{2} $ if $1 \leq p_{1}, q_{1} < 2$ and 
$1 \leq p_{2}, q_{2} < 2$, respectively.  Let also $s > 0$ satisfy \eqref{def:s}.
%
Consider the non-linear estimator $\hat{\bfun}_{j_{1},m_{2}}^{nl}$ given in \eqref{nonlinearestF}, and define $j_{1} = j_{1}(\epsilon)$ and $m_{2} = m_{2}(\epsilon)$ as 
\begin{equation}
\label{eq:j1m1-nl}
2^{(j_{1}(\epsilon)+1)} = \lfloor \epsilon^{-2} \rfloor \quad \mbox{and} \quad 2^{(m_{2}(\epsilon)+1)} =  \lfloor \epsilon^{-2}   \rfloor.
\end{equation}
Define the threshold
$$
t_{\epsilon,\delta} =   \delta^{2} \epsilon^{2} L_{\epsilon},
$$
for some $\delta > 2(2\sqrt{2} +1)$.  Then, there exists a constant $C > 0$ 
 such that
$$
\sup_{\bfun \in \bB_{p,q}^{s_{1},s_{2}}(A_{1},A_{2})} R(\hat{\bfun}_{j_{1}(\epsilon),m_{2}(\epsilon)}^{nl},\bfun) \leq C\; \epsilon^{\frac{4s}{2s + d + 1}},
$$
for all sufficiently small $\epsilon > 0$.
\end{theo}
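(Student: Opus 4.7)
The strategy is the classical block-thresholding analysis of Cai, adapted to the anisotropic tensor-product setting considered here. I would first decompose the $L^{2}$-risk as
\begin{equation*}
R(\hat{\bfun}^{nl}_{j_{1}(\epsilon),m_{2}(\epsilon)},\bfun) = T_{1} + T_{2} + T_{3},
\end{equation*}
where $T_{1}$ gathers the contributions of all blocks $(\lambda,m,r)$ with $-1 \le |\lambda| \le j_{1}(\epsilon)$ and $-1 \le m \le m_{2}(\epsilon)$, while $T_{2}$ and $T_{3}$ are the high-resolution approximation biases coming from truncating at $j_{1}(\epsilon)$ in space and $m_{2}(\epsilon)$ in time. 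Since $2^{j_{1}(\epsilon)+1},\,2^{m_{2}(\epsilon)+1} \sim \epsilon^{-2}$, Lemma~\ref{lemma:approx} together with (\ref{Alambda}) yields $T_{2} \lesssim A_{1}^{2}\, \epsilon^{4 s_{1}}$ and $T_{3} \lesssim A_{2}^{2}\, \epsilon^{4 s_{2}}$; a direct check using (\ref{def:s}) shows that both are $\smallO{\epsilon^{4s/(2s+d+1)}}$ under the stated assumptions on $s_{1}, s_{2}$.

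For $T_{1}$, I would carry out the standard four-case decomposition block-by-block. For each fixed $(\lambda,m,r)$, write the block contribution
\begin{equation*}
\EE \sum_{\ell \in U_{mr}} \bigl( y_{\lambda,m,\ell}\, \1_{\hat{B}_{\lambda,m,r} \ge t_{\epsilon,\delta}} - \tilde{\alpha}_{\lambda,m,\ell} \bigr)^{2}
\end{equation*}
and split the expectation according to whether $B_{\lambda,m,r}$ is larger or smaller than $t_{\epsilon,\delta}/2$, and whether $\hat{B}_{\lambda,m,r}$ is larger or smaller than $t_{\epsilon,\delta}$. On ``$B$ large, $\hat{B}$ large'' the error is at most the within-block noise variance $L_{\epsilon}\,\epsilon^{2}$, and on ``$B$ small, $\hat{B}$ small'' it equals $B_{\lambda,m,r}$. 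The two mismatched events are where the threshold condition $\delta > 2(2\sqrt{2}+1)$ becomes crucial: an exponential deviation inequality for the chi-squared-type statistic $\epsilon^{-2}\hat{B}_{\lambda,m,r}$ --- namely the concentration lemma of Section~\ref{appF} --- shows that their contributions are $\bigO{\epsilon^{\gamma}}$ with $\gamma$ as large as desired (by picking $\delta$ accordingly), uniformly over $\bfun \in \bB_{p,q}^{s_{1},s_{2}}(A_{1},A_{2})$.

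After collecting these bounds, controlling $T_{1}$ reduces to estimating the oracle-type sum
\begin{equation*}
S_{\epsilon} \;=\; \sum_{j=-1}^{j_{1}(\epsilon)} \sum_{|\lambda|=j} \sum_{m=-1}^{m_{2}(\epsilon)} \sum_{r \in A_{m}} \min\bigl( B_{\lambda,m,r},\, L_{\epsilon}\,\epsilon^{2} \bigr).
\end{equation*}
In the dense regime $p_{1},p_{2}\ge 2$ I would introduce critical resolution levels $(j^{\star},m^{\star})$, chosen so that $L_{\epsilon}\,\epsilon^{2}\cdot 2^{j^{\star} d + m^{\star}}$ matches the target rate, then bound the minimum by $L_{\epsilon}\,\epsilon^{2}$ below the critical scales and by $B_{\lambda,m,r}$ above them. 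Both partial sums are controlled by Lemma~\ref{lemma:approx} applied jointly in space (smoothness $s_{1}$) and in time (smoothness $s_{2}$, with the tail decay of $A_{\lambda}$ guaranteed by (\ref{Alambda})); the definition (\ref{def:s}) of the effective exponent $s$ is precisely what equalises the two contributions at the announced rate $\epsilon^{4s/(2s+d+1)}$.

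The main obstacle lies in the sparse regime, where $1 \le p_{1} < 2$ and/or $1 \le p_{2} < 2$, so the Besov bodies are non-convex and Jensen's inequality no longer suffices. This is where the adaptivity of block thresholding earns its keep, but the assumptions $s_{1} \ge d/p_{1}$ and $s_{2} \ge 1/p_{2}$ are essential in order to reduce the scale-by-scale analysis to an $\ell_{2}$-type control. My plan is, for each fixed $\lambda$ and $m$, to split the blocks into ``large'' ones (with $B_{\lambda,m,r} \ge L_{\epsilon}\,\epsilon^{2}$) and ``small'' ones, apply H\"older's inequality with exponents $p_{2}/2$ and $q_{2}/2$ to bound the cardinality of the large blocks by a multiple of $A_{\lambda}^{p_{2}}$, and then aggregate over $\lambda$ using the uniform spatial Besov bound $\sup_{t}\|\bfun(t,\cdot)\|^{s_{1}}_{p_{1},q_{1}} \le A_{1}$ together with (\ref{Alambda}). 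Balancing the resulting two-directional tradeoff through the effective exponent $s$ yields the claimed rate; the most delicate piece is the bookkeeping needed when $p_{1}$ and $p_{2}$ lie on different sides of $2$, since one must simultaneously engage the ``sparse'' argument in one variable and the ``dense'' argument in the other.
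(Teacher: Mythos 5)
Your overall architecture matches the paper's: the same split into the two truncation biases plus the ``kept'' and ``killed'' block terms, the same four-way case analysis on each block driven by whether the true energy $B_{\lambda,m,r}$ and the empirical energy $\hat{B}_{\lambda,m,r}$ sit above or below (fractions of) $t_{\epsilon,\delta}$, the deviation lemma of Section~\ref{appF} to kill the two mismatched events under $\delta>2(2\sqrt{2}+1)$, and critical resolution levels balanced through \eqref{def:s}. Where you genuinely diverge is the sparse regime $p_{i}<2$. The paper never performs a Cai-style count of ``large'' blocks via H\"older: it simply invokes Lemma~\ref{lemma:approx}, which already states its level-wise $\ell_{2}$ bounds in terms of the embedded smoothness indices $s_{1}'=s_{1}+d(1/2-1/p_{1}')$ and $s_{2}'=s_{2}+1/2-1/p_{2}'$, and then defines the critical levels $j_{0},m_{0}$ by $2^{j_{0}+1}=\lfloor\epsilon^{-2s/((2s+d+1)s_{1}')}\rfloor$ and $2^{m_{0}+1}=\lfloor\epsilon^{-2s/((2s+d+1)s_{2}')}\rfloor$, i.e.\ in terms of $s_{1}',s_{2}'$ rather than $s_{1},s_{2}$. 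The hypotheses $s_{1}\ge d/p_{1}$ and $s_{2}\ge 1/p_{2}$ are exactly what makes this crude $\ell_{p}\hookrightarrow\ell_{2}$ embedding lossless at the level of the rate $\epsilon^{4s/(2s+d+1)}$, so the dense argument carries over verbatim and no separate sparse bookkeeping (nor any mixed dense/sparse case analysis across the two directions) is needed. Your counting argument would also work and is the route one must take if one wanted to weaken $s_{i}\ge d/p_{i}$ toward the usual $s_{i}>d/p_{i}-d/2$ regime, so it buys generality the theorem does not claim, at the price of the delicate cross-variable bookkeeping you yourself flag. Two small points to fix if you write this up: for $p_{1}<2$ the spatial tail bias is of order $\epsilon^{4s_{1}'}$, not $\epsilon^{4s_{1}}$ (the conclusion $\smallO{\epsilon^{4s/(2s+d+1)}}$ still holds since $s_{1}'\ge d/2$); and H\"older with exponents $p_{2}/2$ and $q_{2}/2$ is not literally available when $p_{2}<2$ --- the standard device is rather to bound the number of blocks exceeding $L_{\epsilon}\epsilon^{2}$ by the $\ell_{p_{2}}$ block norm divided by $(L_{\epsilon}\epsilon^{2})^{p_{2}/2}$.
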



\begin{proof}
From Parseval's equality, we can decompose the $L^2$-risk of $\hat{\bfun}_{j_{1},m_{2}}^{nl}$ as follows
$$
R(\hat{\bfun}_{j_{1},m_{2}}^{nl},\bfun) = B_{1} + B_{2} + R_{1} + R_{2},
$$
where
\begin{eqnarray*}
B_{1}  & = &  \sum_{j = j_{1}+1}^{+\infty} \sum_{|\lambda| = j}    \sum_{m=-1}^{+\infty} \sum_{\ell = 0}^{2^{m}-1} \left|\tilde{\alpha}_{\lambda,m,\ell}\right|^2 =   \int_{T} \sum_{j = j_{1}+1}^{\infty} \sum_{|\lambda| = j}  \left| {\balpha}_{\lambda}(t) \right|^2 dt,\\
B_{2} & = &   \sum_{j = -1}^{j_{1}} \sum_{|\lambda| = j}  \sum_{m=m_{2}+1}^{+\infty} \sum_{\ell = 0}^{2^{m}-1} \left|\tilde{\alpha}_{\lambda,m,\ell}\right|^2, \\
R_{1} & = & \sum_{j = -1}^{j_{1}} \sum_{|\lambda| = j}  \sum_{m=-1}^{m_{2}} \sum_{r \in A_{m}} \sum_{\ell \in U_{mr}} \EE \left( \left|y_{\lambda,m,\ell} - \tilde{\alpha}_{\lambda,m,\ell}\right|^2 \1_{\left\{ \hat{B}_{\lambda,m,r}  \geq t_{\epsilon, \delta} \right\}} \right),  \\
R_{2} & = & \sum_{j = -1}^{j_{1}} \sum_{|\lambda| = j}  \sum_{m=-1}^{m_{2}} \sum_{r \in A_{m}} \sum_{\ell \in U_{mr}} \EE \left( \left|\tilde{\alpha}_{\lambda,m,\ell}  \right|^2 \1_{\left\{ \hat{B}_{\lambda,m,r}  < t_{\epsilon, \delta} \right\}}  \right).
\end{eqnarray*}
To bound the risk, we need to control the terms $B_{1}$, $B_{2}$, $R_{1}$ and $R_{2}$. Let $p_{1}' = \min(p_{1},2)$ and $p_{2}' = \min(p_{2},2)$. Define also $s_{1}' = s_{1}  + d(1/2-1/p_{1}')$ and $s_{2}' = s_{2}  + 1/2-1/p_{2}'$. 

\medskip
By Lemma \ref{lemma:approx}, there exists a constant $K_{1}' > 0$, only depending on $s_{1}$ and $p_{1}$, such that 
$$
\sum_{j = j_{1}+1}^{+\infty} \sum_{|\lambda| = j}  \left| {\balpha}_{\lambda}(t) \right|^2 \leq K_{1}' A_{1}^2 2^{-2(j_{1}+1)s'_{1}}
$$
implying that
$$
B_{1} \leq K_{1}' A_{1}^2 2^{-2(j_{1}+1)s'_{1}}.
$$
Also, by  Lemma \ref{lemma:approx},  there exists a constant $K'_{2} > 0$, only depending on $s_{2}$ and $p_{2}$, such that
$$
 \sum_{m=m_{2}+1}^{+ \infty} \sum_{\ell = 0}^{2^{m}-1} \left|\tilde{\alpha}_{\lambda,m,\ell}\right|^2 \leq K'_{2} A_{\lambda}^2 2^{-2(m_{2}+1)s'_{2}},
$$
implying, in view of equation \eqref{Alambda}, that
$$
B_{2} \leq K_{2}' A_{2}^2 2^{-2(m_{2}+1)s'_{2}}.
$$
Consider the case $2 \leq p_{1} \leq +\infty$ implying that $s_{1}' =s_{1}$. Thanks to the definitions of $j_{1}(\epsilon)$ given in \eqref{eq:j1m1-nl} and $s$ given in \eqref{def:s}, we obtain
that
$$
B_{1} = \smallO{ \epsilon^{\frac{4s}{2s + d + 1}}}.
$$
In the case $1 \leq p_{1} < 2$, the condition $s_{1} \geq d/p_{1}$, the definitions of $j_{1}(\epsilon)$ given in \eqref{eq:j1m1-nl} and $s$ given  \eqref{def:s} also imply that
$$
B_{1} = \smallO{ \epsilon^{\frac{4s}{2s + d + 1}}}.
$$
Consider the case $2 \leq p_{2} \leq +\infty$ implying that $s_{2}' =s_{2}$. Thanks to the definitions of $m_{2}(\epsilon)$ given in \eqref{eq:j1m1-nl} and $s$ given in \eqref{def:s}, we obtain
that
$$
B_{2} = \smallO{ \epsilon^{\frac{4s}{2s + d + 1}}}.
$$
In the case $1 \leq p_{2} < 2$, the condition $s_{2} \geq 1/p_{2}$, the definitions of $m_{2}(\epsilon)$ given in \eqref{eq:j1m1-nl} and $s$ given in \eqref{def:s} also imply that
$$
B_{2} = \smallO{ \epsilon^{\frac{4s}{2s + d + 1}}}.
$$
Let us now write $R_{1}$ and $R_{2}$ as the sum of two terms
$$
R_{1} = R_{1,1} + R_{1,2} 
$$
where
\begin{eqnarray*}
R_{1,1} & = & \sum_{j = -1}^{j_{1}} \sum_{|\lambda| = j}  \sum_{m=-1}^{m_{2}} \sum_{r \in A_{m}} \sum_{\ell \in U_{mr}} \EE \left( \left|y_{\lambda,m,\ell} - \tilde{\alpha}_{\lambda,m,\ell}\right|^2 \1_{\left\{    \sum_{\ell \in U_{mr}}  \left|y_{\lambda,m,\ell} - \tilde{\alpha}_{\lambda,m,\ell}\right|^2   \geq \frac{1}{4} t_{\epsilon, \delta} \right\}} \right),  \\
R_{1,2} & = &  \sum_{j = -1}^{j_{1}} \sum_{|\lambda| = j}  \sum_{m=-1}^{m_{2}} \sum_{r \in A_{m}} \sum_{\ell \in U_{mr}} \EE \left( \left|y_{\lambda,m,\ell} - \tilde{\alpha}_{\lambda,m,\ell}\right|^2 \right) \1_{\left\{  B_{\lambda,m,r}  > \frac{1}{4} t_{\epsilon, \delta} \right\}} , \\
R_{2,1} & = & \sum_{j = -1}^{j_{1}} \sum_{|\lambda| = j}  \sum_{m=-1}^{m_{2}} \sum_{r \in A_{m}} \sum_{\ell \in U_{mr}} \EE \left( \left| \tilde{\alpha}_{\lambda,m,\ell}\right|^2 \1_{\left\{  \sum_{\ell \in U_{mr}}  \left|y_{\lambda,m,\ell} - \tilde{\alpha}_{\lambda,m,\ell}\right|^2   \geq  \frac{1}{4} t_{\epsilon, \delta} \right\}} \right),  \\
R_{2,2} & = &  \sum_{j = -1}^{j_{1}} \sum_{|\lambda| = j}  \sum_{m=-1}^{m_{2}} \sum_{r \in A_{m}} \sum_{\ell \in U_{mr}}  \left| \tilde{\alpha}_{\lambda,m,\ell}\right|^2  \1_{\left\{  B_{\lambda,m,r}  <  \frac{5}{2} t_{\epsilon, \delta} \right\}} ,
\end{eqnarray*}
where we have used the inequality  $\left|y_{\lambda,m,\ell} \right|^2 \leq 2 \left|y_{\lambda,m,\ell}  - \tilde{\alpha}_{\lambda,m,\ell}\right|^2 + 2 \left|\tilde{\alpha}_{\lambda,m,\ell}\right|^2$. \\

Let us first give an upper bound for $\Delta_{1} =  R_{1,1} + R_{2,1}$ as follows. Using Cauchy-Schwarz's inequality, moments properties of Gaussian random variables,  Lemma \ref{lemma:approx} and Lemma \ref{lem:deviation}, we have
\begin{eqnarray*}
\Delta_{1}   & \leq & \sum_{j = -1}^{j_{1}} \sum_{|\lambda| = j}  \sum_{m=-1}^{m_{2}} \sum_{r \in A_{m}} \sum_{\ell \in U_{mr}}  \left( \EE \left( \left|y_{\lambda,m,\ell} - \tilde{\alpha}_{\lambda,m,\ell}\right|^4 \right)^{1/2} + \left| \tilde{\alpha}_{\lambda,m,\ell}\right|^2  \right) \\
& & \times \left( \PP \left(\sum_{\ell \in U_{mr}}  \left|y_{\lambda,m,\ell} - \tilde{\alpha}_{\lambda,m,\ell}\right|^2   \geq \frac{1}{4} t_{\epsilon, \delta}\right)  
\right)^{1/2} \\
& \leq &  \left( \sqrt{3} \; 2^{(j_{1}+1)d + (m_{2}+1)}  \epsilon^{2} +  \sum_{j = -1}^{j_{1}} \sum_{|\lambda| = j}  \sum_{m=-1}^{m_{2}}  K'_{2} A_{\lambda}^{2}  2^{-ms_{2}'} \right) \epsilon^{\frac{1}{2}\left( \delta/2 -1\right)^2} \\
& = &\bigO{2^{(j_{1}+1)d + (m_{2}+1)}  \epsilon^{2 + \frac{1}{2}\left( \delta/2 -1\right)^2} + \epsilon^{\frac{1}{2}\left( \delta/2 -1\right)^2}   } \\
& = &\bigO{\epsilon^{-2}},
\end{eqnarray*}
where we have used the assumption that $\delta > 2(2\sqrt{2} +1)$.

Now, let $\Delta_{2} = R_{1,2} + R_{2,2}$. Let $j_{0}$ and $m_{0}$ be defined as
$$
2^{j_{0} +1} = \lfloor \epsilon^{-\frac{2s}{(2s+d+1)s_{1}'}}   \rfloor \quad \mbox{ and }  \quad 2^{m_{0} +1} =  \lfloor \epsilon^{-\frac{2s}{(2s+d+1)s_{2}'}}   \rfloor,
$$
where $s_{1}' = s_{1}  + d(1/2-1/p_{1}')$ and $s_{2}' = s_{2}  + 1/2-1/p_{2}'$. Note that $-1 \leq j_{0} < j_{1}$ and $-1 \leq m_{0} < m_{2}$ for all sufficiently small $\epsilon > 0$. Then, $\Delta_{2}$ can be partitioned as $\Delta_{2} = \Delta_{2,1} + \Delta_{2,2}$ where the first component $\Delta_{2,1}$ is calculated over the indices $-1 \leq j \leq j_{0}$ and  $-1 \leq m \leq m_{0}$, namely
$$
\Delta_{2,1} =  \sum_{j = -1}^{j_{0}} \sum_{|\lambda| = j}  \sum_{m=-1}^{m_{0}} \left[ \sum_{\ell =0}^{2^{m}-1}   \EE \left( \left|y_{\lambda,m,\ell} - \tilde{\alpha}_{\lambda,m,\ell}\right|^2 \right) \1_{\left\{  B_{\lambda,m,r}  > \frac{1}{4} t_{\epsilon, \delta} \right\}} +  \sum_{r \in A_{m}} B_{\lambda,m,r}    \1_{\left\{  B_{\lambda,m,r}  <  \frac{5}{2} t_{\epsilon, \delta} \right\}}    \right],
$$
and the second component $\Delta_{2,2}$ is calculated over the remaining indices, namely
\begin{eqnarray*}
\Delta_{2,2}& =& \sum_{j = j_{0}+1}^{j_{1}} \sum_{|\lambda| = j}  \sum_{m=-1}^{m_{2}} \left[ \sum_{\ell =0}^{2^{m}-1}   \EE \left( \left|y_{\lambda,m,\ell} - \tilde{\alpha}_{\lambda,m,\ell}\right|^2 \right) \1_{\left\{  B_{\lambda,m,r}  > \frac{1}{4} t_{\epsilon, \delta} \right\}} +  \sum_{r \in A_{m}} B_{\lambda,m,r}    \1_{\left\{  B_{\lambda,m,r}  <  \frac{5}{2} t_{\epsilon, \delta} \right\}}    \right] \\
& & +  \sum_{j = -1}^{j_{0}} \sum_{|\lambda| = j}  \sum_{m= m_{0}+1}^{m_{2}} \left[ \sum_{\ell =0}^{2^{m}-1}   \EE \left( \left|y_{\lambda,m,\ell} - \tilde{\alpha}_{\lambda,m,\ell}\right|^2 \right) \1_{\left\{  B_{\lambda,m,r}  > \frac{1}{4} t_{\epsilon, \delta} \right\}} +  \sum_{r \in A_{m}} B_{\lambda,m,r}    \1_{\left\{  B_{\lambda,m,r}  <  \frac{5}{2} t_{\epsilon, \delta} \right\}}    \right].
\end{eqnarray*}
Let us first give an upper bound for $\Delta_{2,1}$ as follows
\begin{eqnarray*}
\Delta_{2,1} & \leq &   \sum_{j = -1}^{j_{0}} \sum_{|\lambda| = j}  \sum_{m=-1}^{m_{0}}  \left[ \sum_{\ell =0}^{2^{m}-1}   \EE \left( \left|y_{\lambda,m,\ell} - \tilde{\alpha}_{\lambda,m,\ell}\right|^2 \right)  +  \sum_{r \in A_{m}}  \frac{5}{2} t_{\epsilon, \delta}   \right] \\
& = & \bigO{2^{(j_{0}+1)d + (m_{0}+1)} \epsilon^{2}} \\
& = & \bigO{\epsilon^{\frac{4s}{2s + d +1}} },
\end{eqnarray*}
where we have used the moments properties of Gaussian random variables,   and the fact that the blocks $A_{m}$ are of length $L_{\epsilon}$.\\

 Now, we compute an upper bound for $\Delta_{2,2}$.
  We have
 \begin{eqnarray*}
\Delta_{2,2}& \leq & \sum_{j = j_{0}+1}^{j_{1}} \sum_{|\lambda| = j}  \sum_{m=-1}^{m_{2}} \left[ \sum_{r \in A_{m}} \sum_{\ell \in U_{mr}}    \epsilon^{2}  \1_{\left\{  B_{\lambda,m,r}  > \frac{1}{4} t_{\epsilon, \delta} \right\}} +  \sum_{r \in A_{m}} B_{\lambda,m,r}     \right] \\
& & +  \sum_{j = -1}^{j_{0}} \sum_{|\lambda| = j}  \sum_{m= m_{0}+1}^{m_{2}} \left[ \sum_{r \in A_{m}} \sum_{\ell \in U_{mr}}    \epsilon^{2} \1_{\left\{  B_{\lambda,m,r}  > \frac{1}{4} t_{\epsilon, \delta} \right\}} +  \sum_{r \in A_{m}} B_{\lambda,m,r}     \right].
\end{eqnarray*}
Noticing that $\delta \geq 2$,  we see that $\sum_{r \in A_{m}} \sum_{\ell \in U_{mr}}    \epsilon^{2} \leq  \frac{1}{4} t_{\epsilon, \delta} $, which implies
 \begin{eqnarray*}
\Delta_{2,2}& \leq &2  \sum_{j = j_{0}+1}^{j_{1}} \sum_{|\lambda| = j}  \sum_{m=-1}^{m_{2}} \left[ \sum_{r \in A_{m}} B_{\lambda,m,r}      \right] \\
& & + 2 \sum_{j = -1}^{j_{0}} \sum_{|\lambda| = j}  \sum_{m= m_{0}+1}^{m_{2}} \left[    \sum_{r \in A_{m}} B_{\lambda,m,r}     \right].
\end{eqnarray*}
Then, noticing that $\sum_{r \in A_{m}} B_{\lambda,m,r}  =  \sum_{\ell = 0}^{2^{m}-1} \left|\tilde{\alpha}_{\lambda,m,\ell}\right|^2$,  by Lemma \ref{lemma:approx} we have
 \begin{eqnarray*}
\Delta_{2,2}& = & \bigO{  \sum_{j = j_{0}+1}^{j_{1}} \sum_{|\lambda| = j} \int_{T} \left| \balpha_{\lambda}(t) \right|^2dt  } +  \bigO{  \sum_{j = -1}^{j_{0}} \sum_{|\lambda| = j}   A_{\lambda}^2 \sum_{m= m_{0}+1}^{m_{2}}  \sum_{\ell = 0}^{2^{m}-1} \left|\tilde{\alpha}_{\lambda,m,\ell}\right|^2 } \\
& = & \bigO{  \sum_{j = j_{0}+1}^{j_{1}}  2^{-2 j s_{1}'}  }  +  \bigO{  \sum_{j = -1}^{j_{0}} \sum_{|\lambda| = j}  A_{\lambda}^2 \sum_{m= m_{0}+1}^{m_{2}}2^{-2 m s_{2}'}  } \\
& = & \bigO{  2^{-2 (j_{0}+1) s_{1}'}  }  +  \bigO{  2^{-2 (m_{0}+1) s_{2}'}  } = \bigO{\epsilon^{\frac{4s}{2s + d +1}} },
\end{eqnarray*}
using the definition of $j_{0}$ and $m_{0}$. This completes the proof of the theorem.
\end{proof}

\section{Numerical experiments}
\label{numerical}

We now illustrate the usefulness of the adaptive nonlinear wavelet estimator described in Section \ref{sec:adapt} with the help of simulated and real-data examples. The overall numerical study presented below has been carried out in the {\tt Matlab 7.7.0} programming environment.

\subsection{Simulated data}

We have used as a synthetic 2-dimensional (2D) example the Shepp-Logan phantom image (see \cite{Jain:1989:FDI:59921}) of size $N \times N$, with $N=64$ displayed in Figure \ref{fig:simus_setting}(a). This image is made of piecewise constant regions with different shape that partition the $N \times N$ pixels into 6 regions represented by different colors in  Figure \ref{fig:simus_setting}(a). To each pixel of a given region, we associate a one-dimensional (1D) signal of length $n = 128$. In this way, we are able to create a time-dependent 2D image $\left( \bfun(t_{\ell}, \underline{x}_{(k_{1},k_{2})}) \right)_{1 \leq \ell \leq n, 1 \leq k_{1}, k_{2} \leq N}$ that can be considered as the discretization of a function $\bfun : [0,1] \times [0,1]^{2} \to \RR$,  with $t_{\ell} = \frac{\ell}{n}$ and $\underline{x}_{(k_{1},k_{2})} = \left(\frac{k_{1}}{n},\frac{k_{2}}{n}\right)$.
\begin{figure}[htbp]
\centering
\subfigure[]
{ \includegraphics[width=5.8cm]{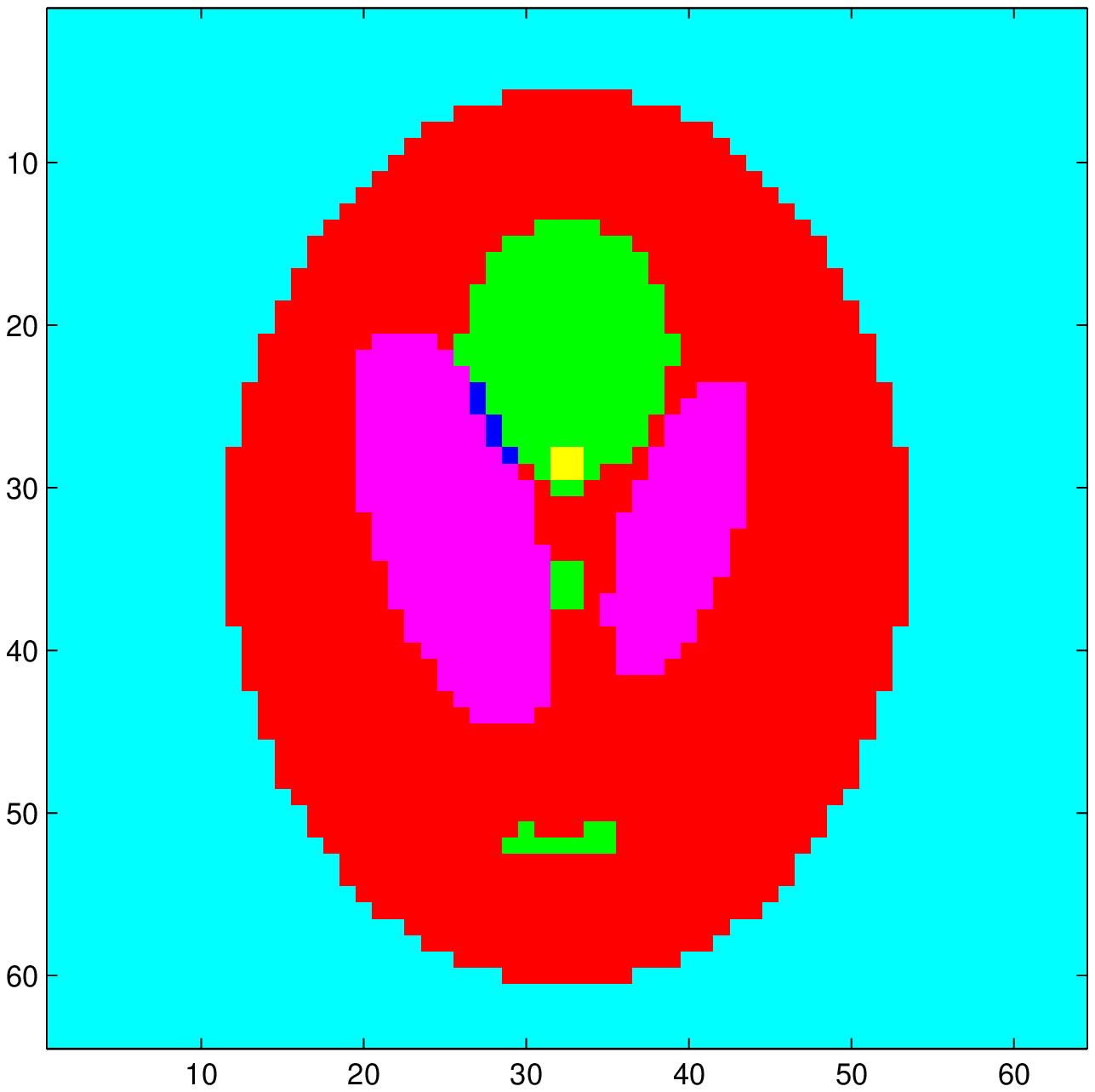}  }
\subfigure[]
{ \includegraphics[width=7.5cm]{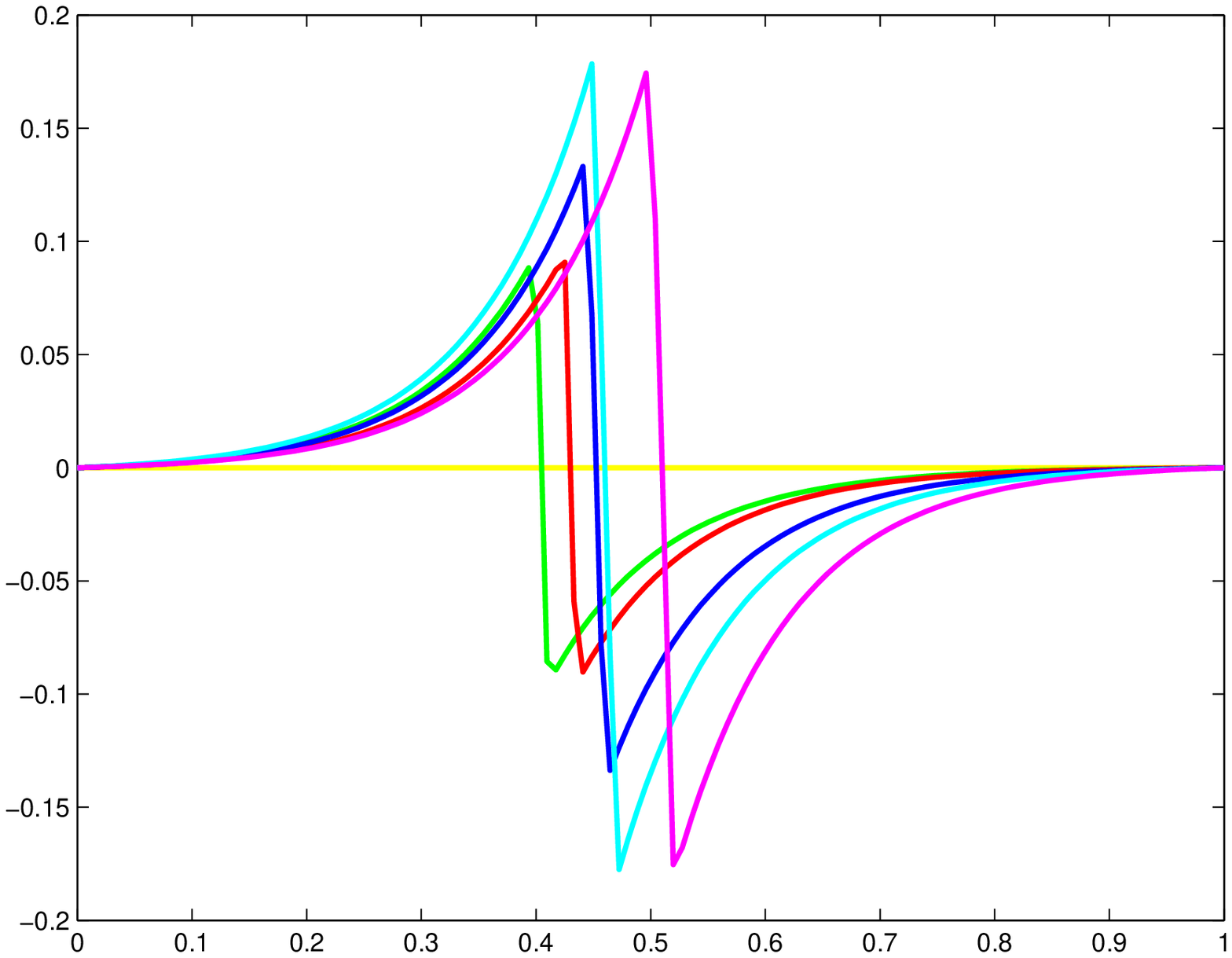} }

\caption{Synthetic example: (a) Shepp-Logan phantom made of 6 differents regions; (b) 1D signals associated to each region of the phantom image.} \label{fig:simus_setting}
\end{figure}

Then, we have created noisy data from the model
\begin{equation}
Y_{\ell,(k_{1},k_{2})} =  \bfun(t_{\ell}, \underline{x}_{(k_{1},k_{2})}) + \sigma w_{\ell,(k_{1},k_{2})}, \; 1 \leq \ell \leq n,  \; 1 \leq k_{1}, k_{2} \leq N, \label{eq:modsimus}
\end{equation}
where the $w_{\ell,(k_{1},k_{2})}$'s are i.i.d.\ standard Gaussian random variables, and $\sigma^2 > 0$ is the variance in the measurements ranging from a low to a high level in the simulations (we took signal-to-noise ratios equal to 7, 5 and 3). It is well known in nonparametric statistics (see e.g.\ \cite{MR1425958}) that there exists an asymptotic equivalence (in Le Cam sense) between the regression model \eqref{eq:modsimus} on $n N^2$ equi-spaced points, for each fixed $t \in T$,  and the white noise model \eqref{model:direct}, when taking $\epsilon = \frac{\sigma}{\sqrt{n N^2}}$. Therefore, thanks to this asymptotic equivalence, one can use the 2D+time dependent wavelet block thresholding approach described in Section \ref{sec:adapt} to denoise data from model \eqref{eq:modsimus}. To show the benefits of our approach, we compare it to two other mehods:
\begin{description}
\item[-] pixel by pixel denoising based on 1D wavelet thresholding: for each fixed pixel $(k_{1},k_{2})$, we apply a standard 1D wavelet-based denoising procedure with the universal threshold to the 1D data $\left(Y_{\ell,(k_{1},k_{2})}\right)_{1 \leq \ell \leq n}$,
\item[-]  slice by slice denoising based on 2D wavelet thresholding: for each fixed time $t_{\ell}$, we apply a standard 2D wavelet-based denoising procedure with the universal threshold to the 2D data $\left(Y_{\ell,(k_{1},k_{2})}\right)_{1 \leq k_{1}, k_{2} \leq N}$.
\end{description}
Then, we have generated $M=100$ repetitions of model \eqref{eq:modsimus} for three different values of the considered signal-to-noise ratio. For each replication, the quality of an estimate $\hat{\bfun}$ obtained by one of the above described methods is measured via its empirical mean squared error  
\begin{equation} \label{eq:MSE}
\frac{1}{n N^2} \sum_{\ell = 1}^{n} \sum_{k_{1},k_{2}=1}^{N} \left( \hat{\bfun}(t_{\ell}, \underline{x}_{(k_{1},k_{2})}) - \bfun(t_{\ell}, \underline{x}_{(k_{1},k_{2})})  \right)^2.
\end{equation}
The results of these simulations are displayed in Figure \ref{eq:modsimus} in the form of boxplots of the empirical mean squared error. Clearly, our approach yields the best results. The benefits of our method can also be clearly seen from the images displayed in Figure \ref{fig:simus_denoising} which show temporal cuts of the various estimators for a given simulation of the model.

\begin{figure}[htbp]
\centering
\subfigure[]
{ \includegraphics[width=5cm,height=5cm]{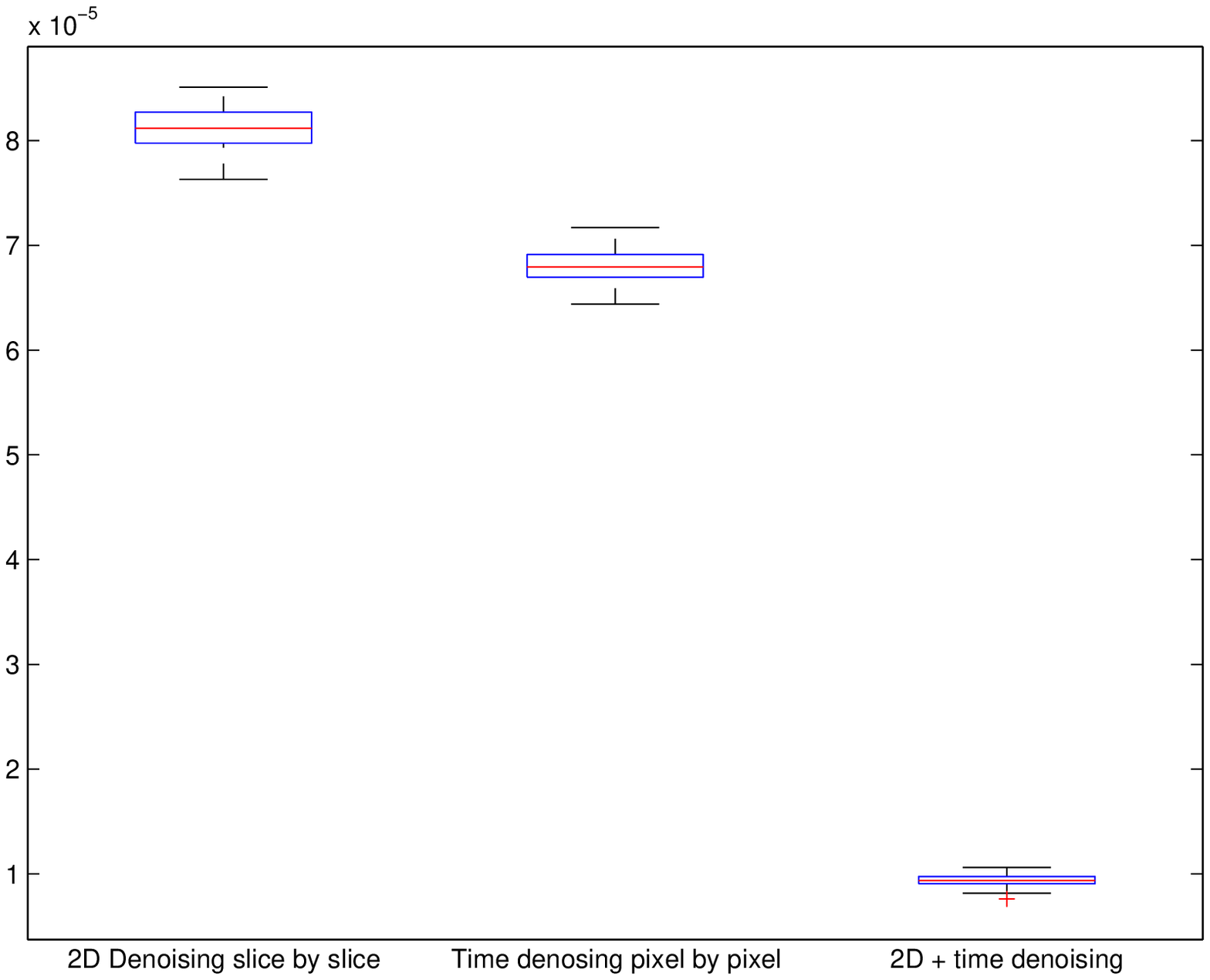}  }
\subfigure[]
{ \includegraphics[width=5cm,height=5cm]{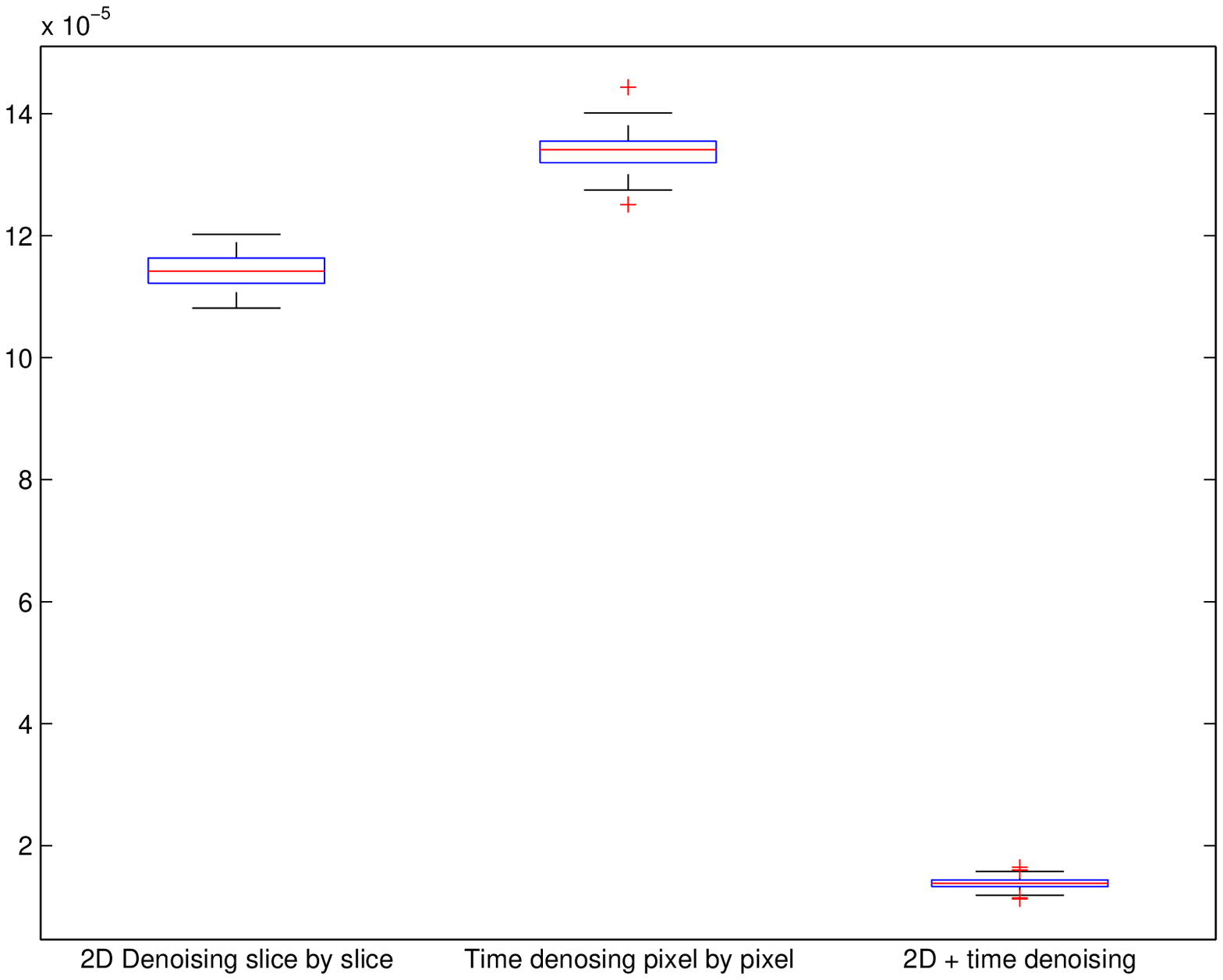} }
\subfigure[]
{ \includegraphics[width=5cm,height=5cm]{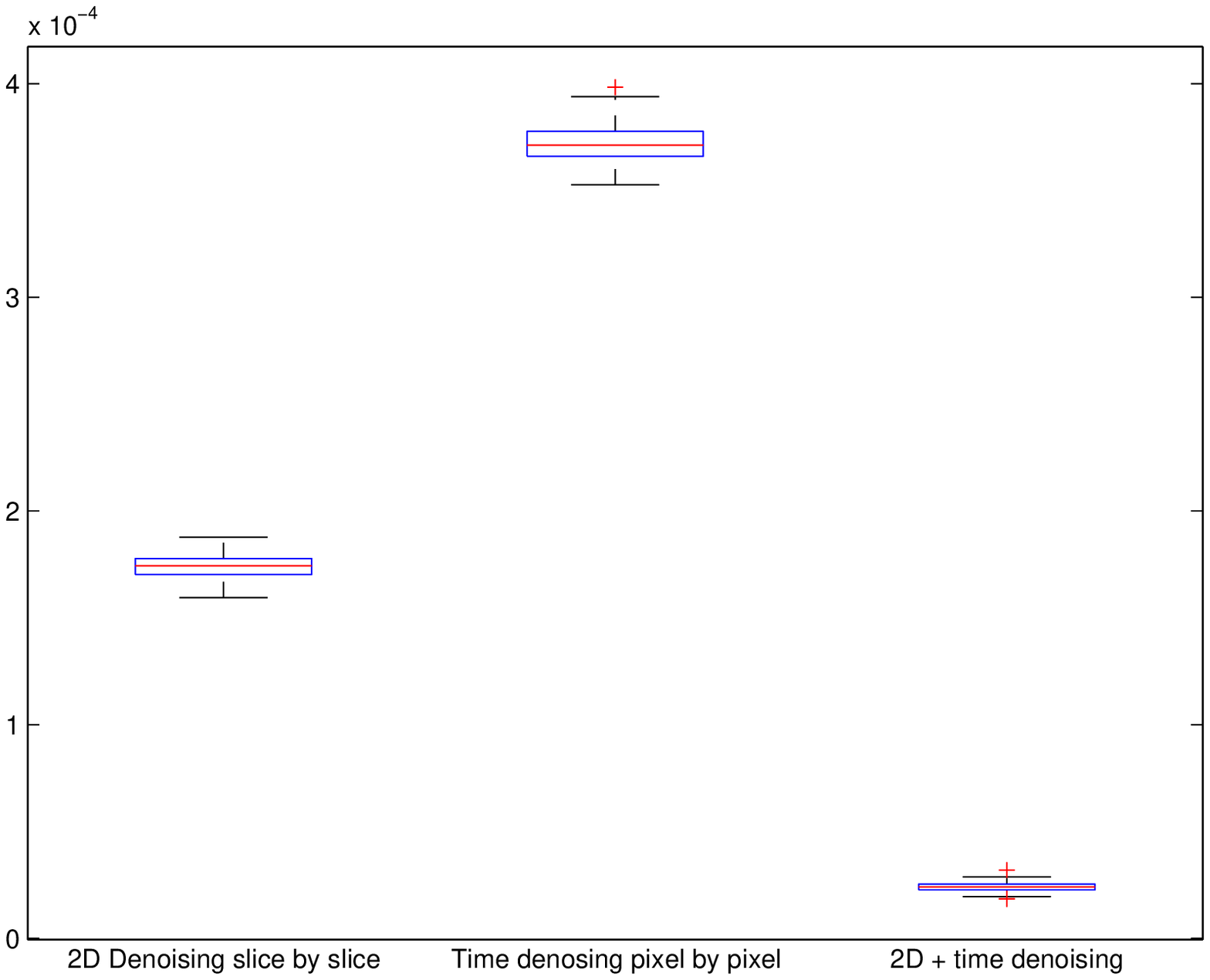} }

\caption{Boxplot of the empirical mean squared \eqref{eq:MSE} error over $M=100$ simulations from model \eqref{eq:modsimus} for the three methods (from left to right: pixel by pixel denoising based on 1D wavelet thresholding, slice by slice denoising based on 2D wavelet thresholding, 2D + time wavelet block thresholding) and for various values of the signal-to-noise ratio (SNR): (a) $SNR = 7$; (b) $SNR = 5$; (c) $SNR = 3$. } \label{fig:MSE}
\end{figure}

\begin{figure}[htbp]
\centering
\subfigure[]
{ \includegraphics[width=7.5cm]{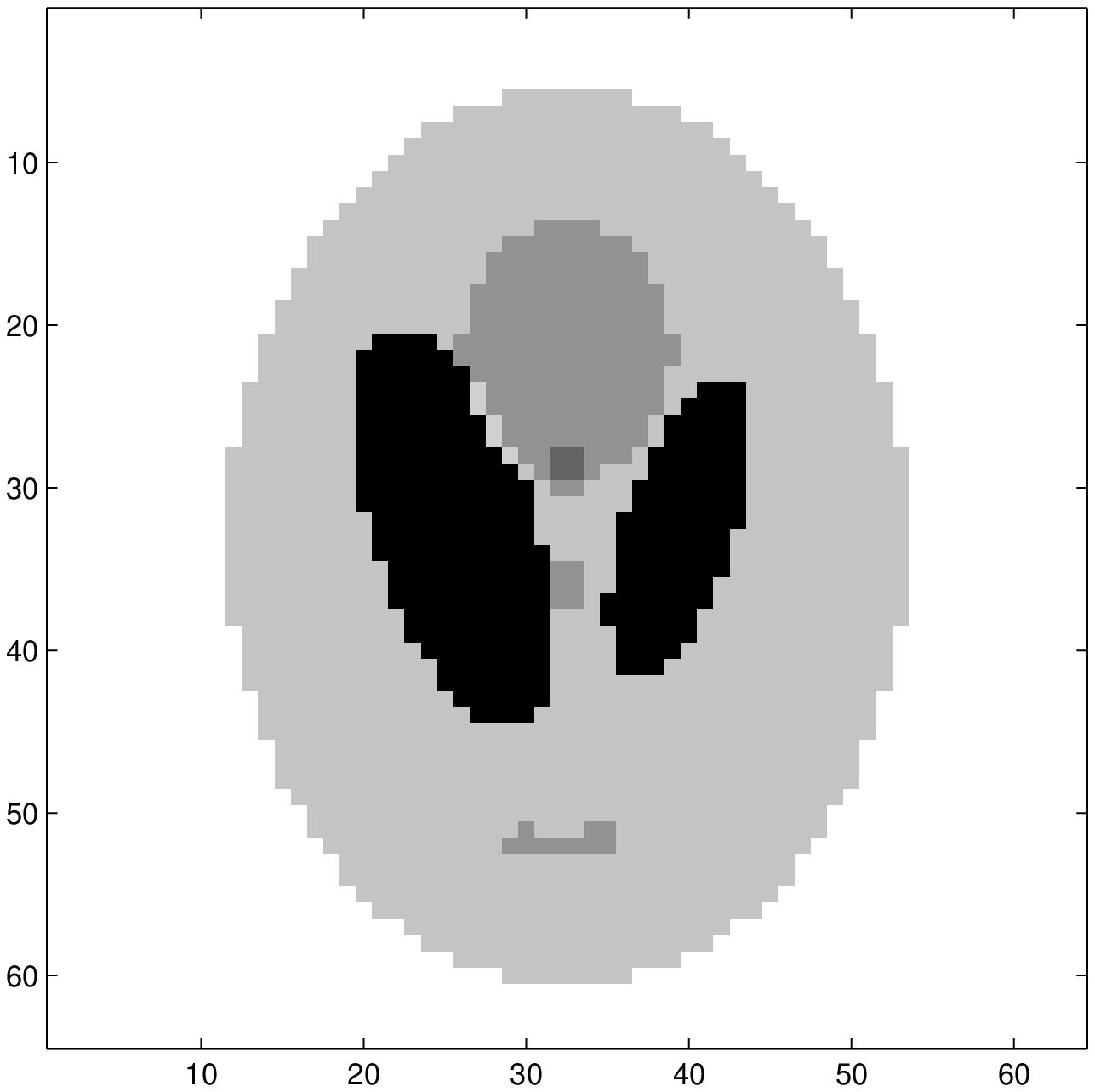}  }
\subfigure[]
{ \includegraphics[width=7.5cm]{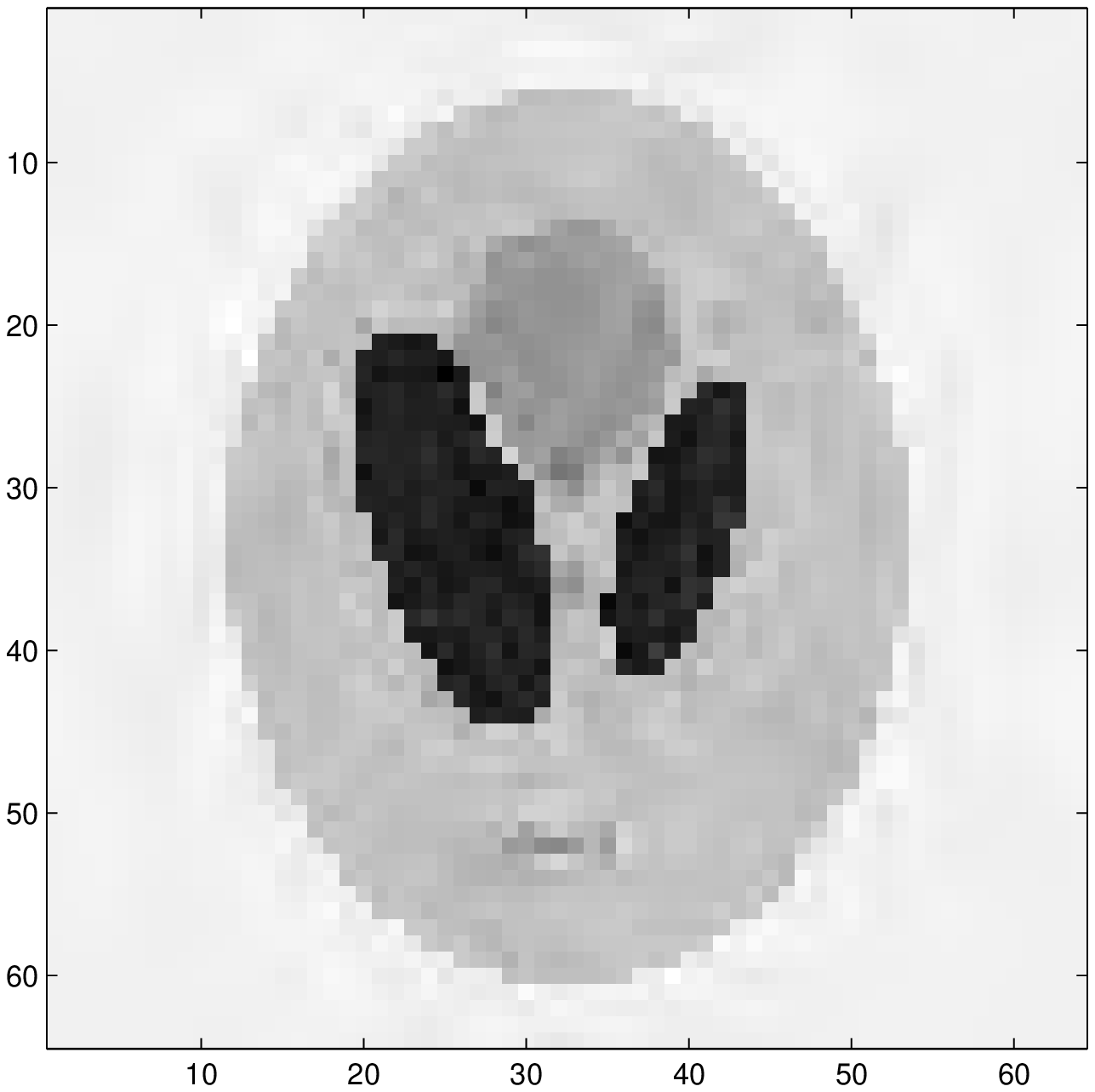} }

\subfigure[]
{ \includegraphics[width=7.5cm]{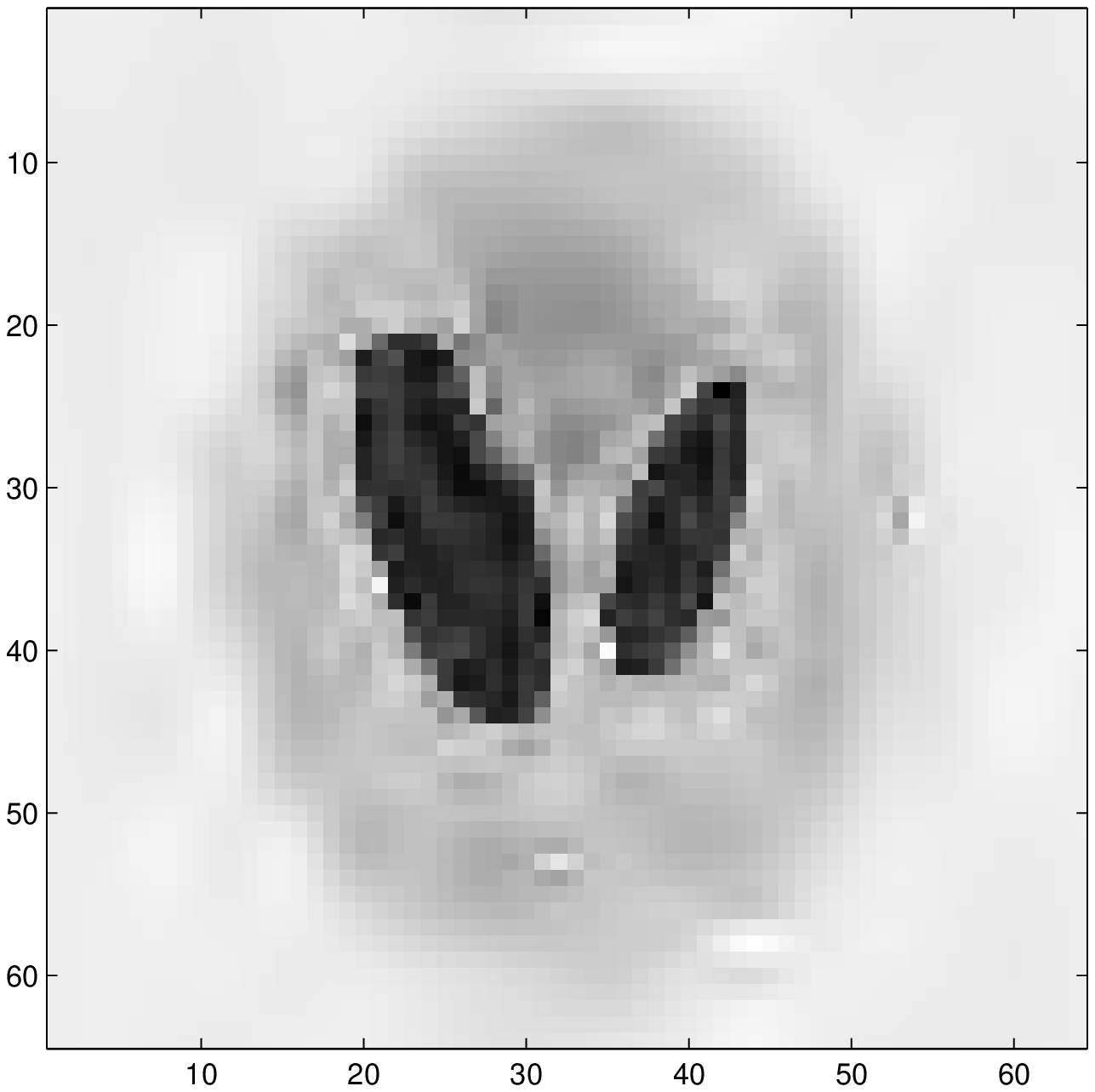}  }
\subfigure[]
{ \includegraphics[width=7.5cm]{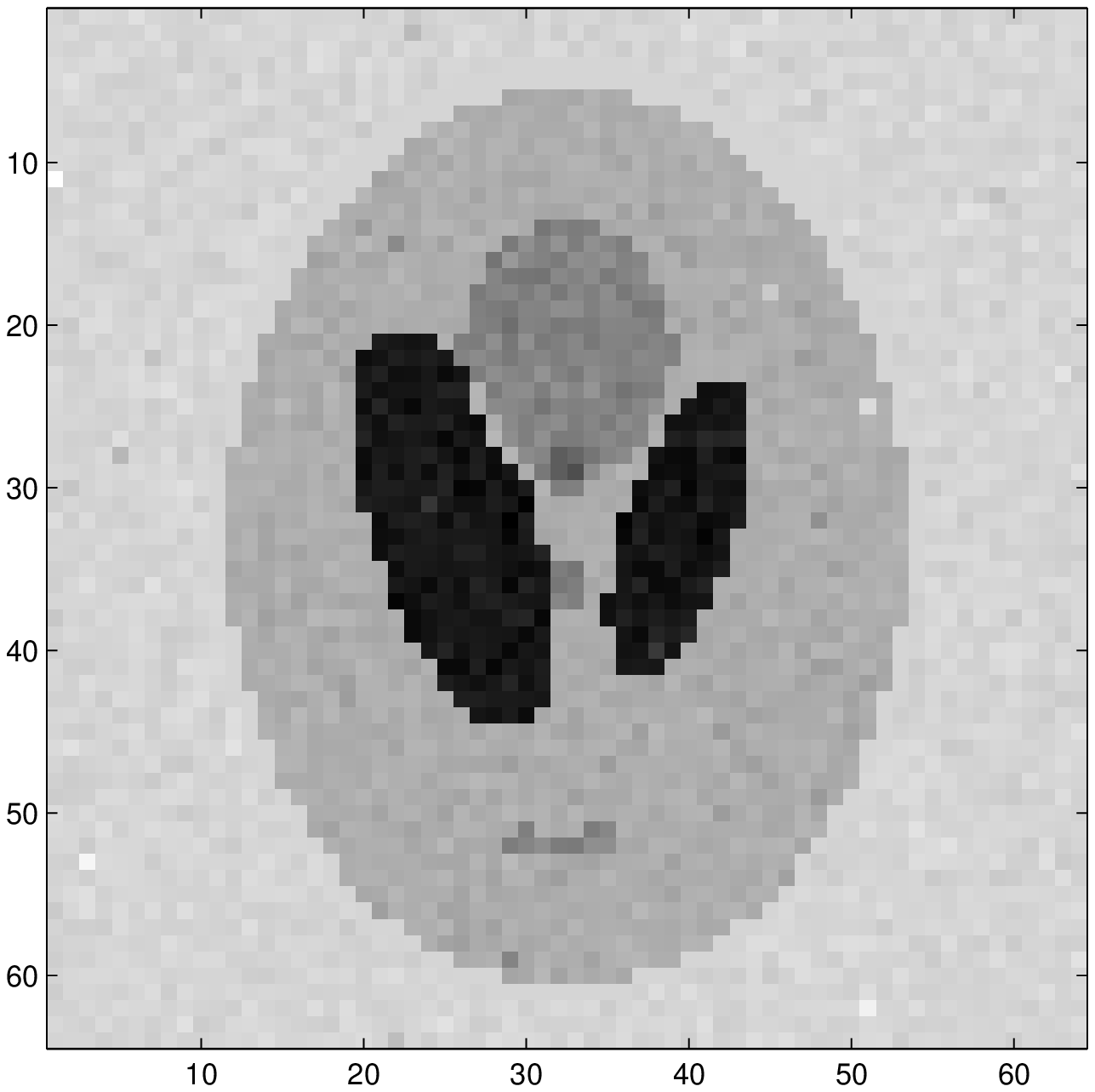} }

\caption{Typical estimates obtained by the three methods with a signal-to-noise ratio $SNR = 5$ for a temporal cut at $t_{\ell} = 0.5748$: (a) true image without additive noise; (b) 2D + time wavelet block thresholding (our method); (c) slice by slice denoising based on 2D wavelet thresholding; (d) pixel by pixel denoising based on 1D wavelet thresholding.} \label{fig:simus_denoising}
\end{figure}

\subsection{Real data}

Now, we return to the real-data example on satellite remote sensing data discussed in Section \ref{sec:motex}. To apply the suggested adaptive nonlinear wavelet estimator, it is necessary to estimate the level of noise in the measurements. For this purpose, we estimate the level of noise in each 2D image at each wavelength using the median absolute deviation (MAD) of the empirical 2D wavelet coefficients at the highest level of resolution (see \cite{ABS01} for further details on this procedure). Then, to apply our method, we took  $\epsilon = \frac{\hat{\sigma}}{\sqrt{n N^2}}$ with $\hat{\sigma}$ being the maximum of these estimated values by MAD over the $n=128$ wavelength, with $N=64$.  The result of our denoising procedure is displayed in Figure \ref{fig:ONERA_denoising}.

\begin{figure}[htbp]
\centering
\subfigure[]
{ \includegraphics[width=7.5cm]{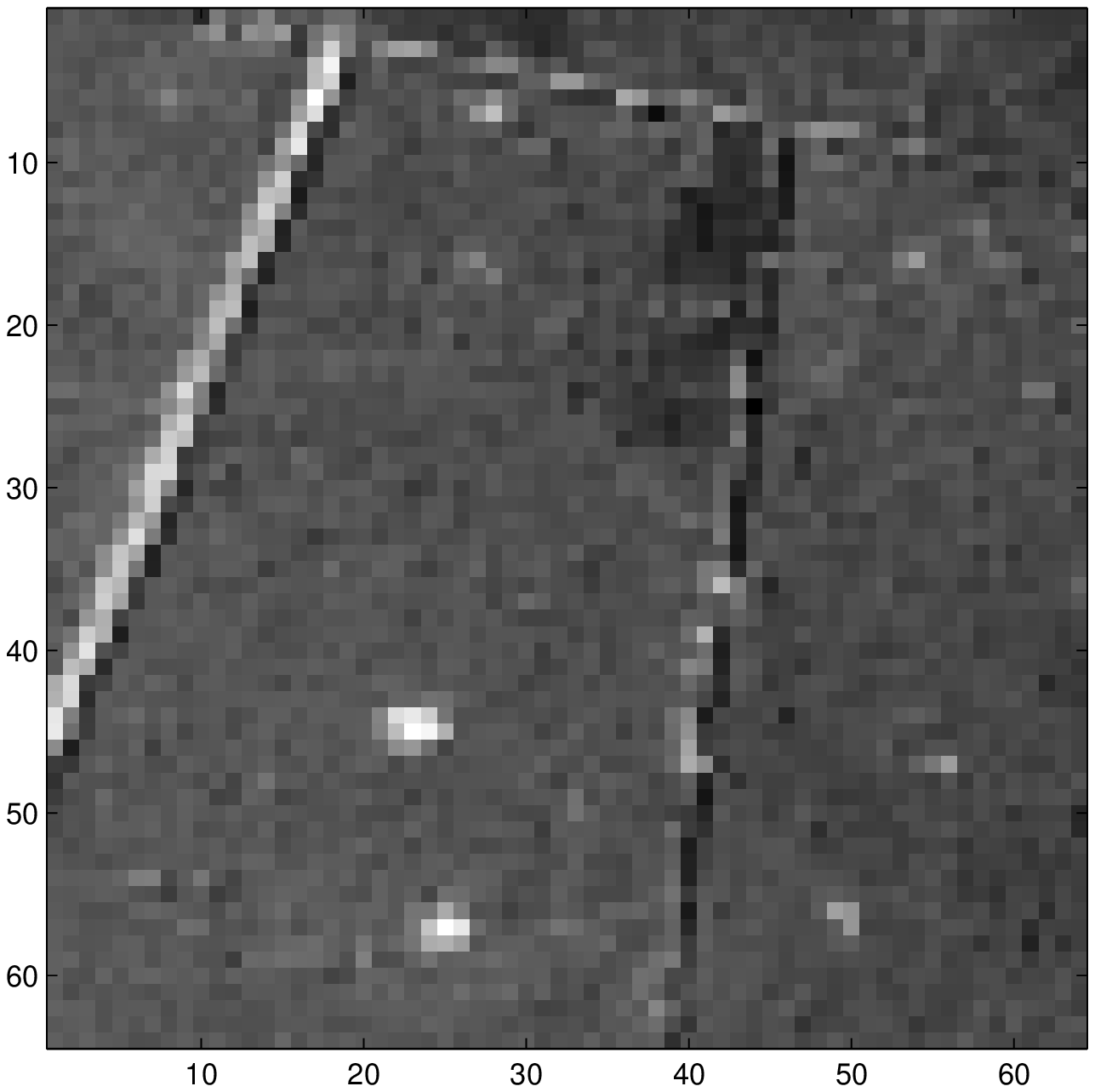}  }
\subfigure[]
{ \includegraphics[width=7.5cm]{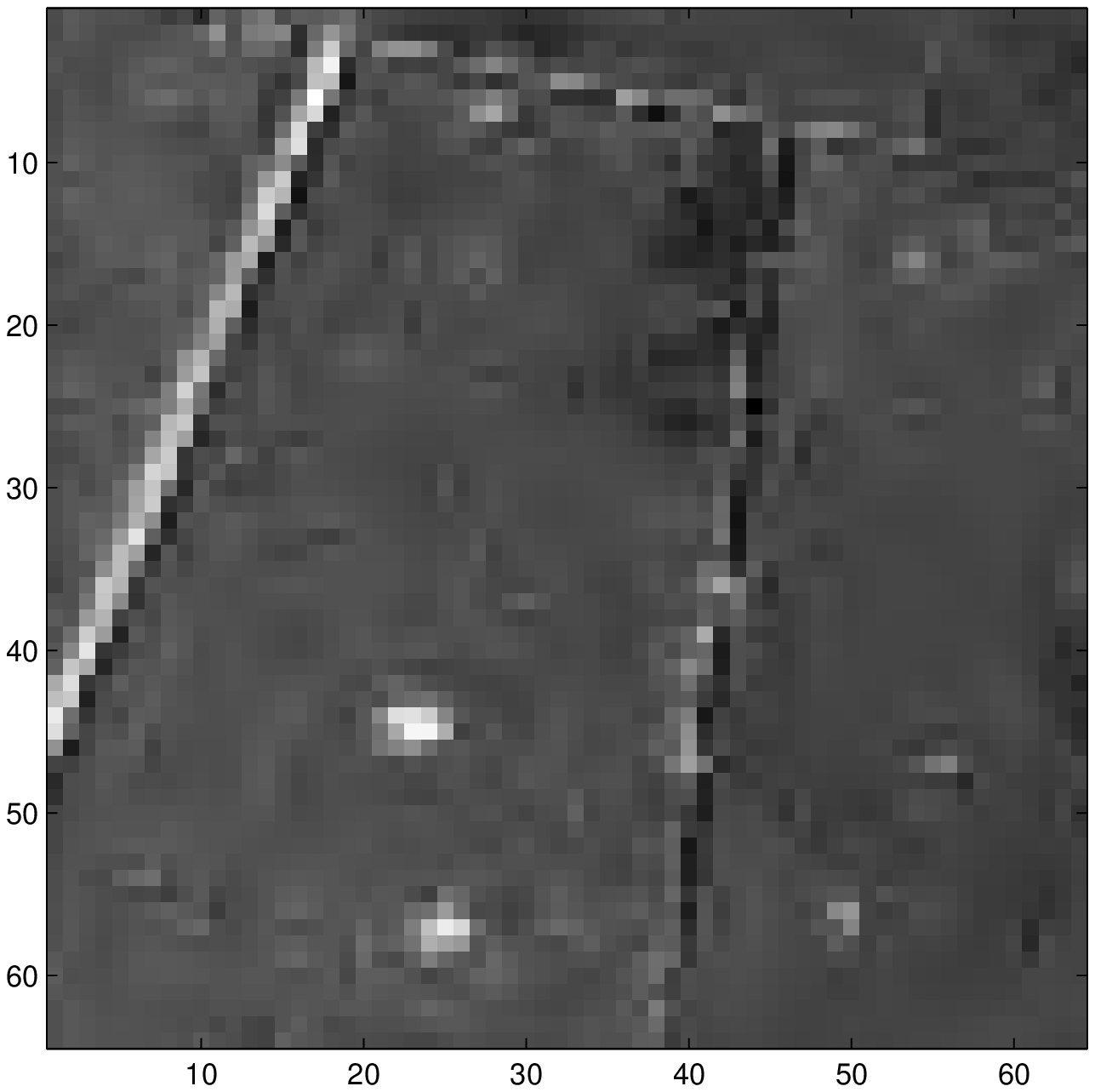} }

\subfigure[]
{ \includegraphics[width=7.5cm]{Figures/Onera_signals.eps}  }
\subfigure[]
{ \includegraphics[width=7.5cm]{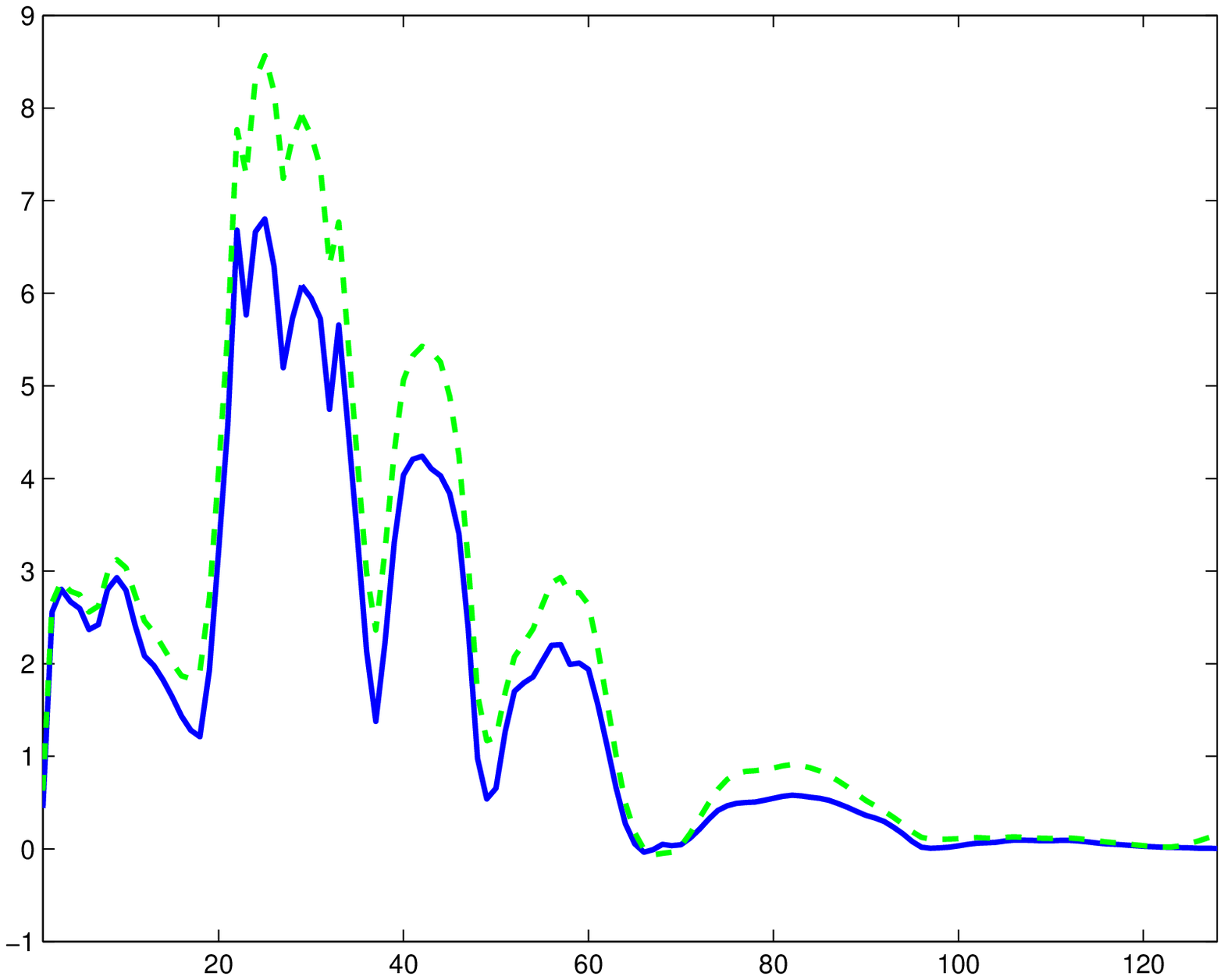} }

\caption{Satellite remote sensing image ($64 \times 64$ pixels, over $128$ wavelengths): (a) a 2D image measured at a specific wavelength (raw data); (b) 2D  image at the same wavelength  obtained after applying  our method; (c) evolution over wavelength of the intensities of the two pixels in green and blue shown in Figure \ref{fig:ONERA}  (raw data); (d) intensities of these two pixels after denoising by our method.} \label{fig:ONERA_denoising}
\end{figure}

\section{Concluding remarks}
\label{conclusions}

We considered the nonparametric estimation problem of time-dependent multivariate functions observed in a presence of additive cylindrical Gaussian white noise of a small intensity. We derived minimax lower bounds for the $L^2$-risk in the proposed spatio-temporal model as the intensity goes to zero, when the underlying unknown response function is assumed to belong to a ball of appropriately constructed inhomogeneous time-dependent multivariate functions. The choice of this class of functions was motivated by real-data examples and  illustrated with the help of an example on satellite remote sensing data. We also proposed both non-adaptive linear and adaptive non-linear wavelet estimators that are asymptotically optimal (in the minimax sense) in a wide range of the so-constructed balls of inhomogeneous time-dependent multivariate functions. The usefulness of the suggested adaptive nonlinear wavelet estimator was illustrated with the help of simulated and real-data examples.

\medskip
Some extensions of the present work are possible. They are briefly mentioned below. 

\medskip
\noindent
[{\tt Inverse Problems}] Model (\ref{model:direct}) can be extended to the case where the signal is observed through a linear operator plus noise. More precisely,  one can consider the nonparametric estimation problem of time-dependent multivariate functions observed through a known or unknown linear operator with kernel $k(\underline{x},\underline{u})$ and in a presence of  additive cylindrical Gaussian white noise,  namely
\begin{equation}
d Y_{\epsilon}(t,\underline{x}) = \left( \int_{\XX} k(\underline{x},\underline{u}) \bfun(t,\underline{u}) d \underline{u} \right) d \underline{x} + \epsilon d W(t,\underline{x}), \label{model:indirect}
\end{equation}
where, as earlier,  $t \in T$ ($T$ is a compact subset of $\RR$) is the time variable, $\underline{x} \in \XX$ ($\XX$ is a compact subset of $\RR^{d}$, $d \geq 1$) is the space variable, $ \bfun \in  \LL^{2}(  T \times \XX )$ is the time-dependent multivariate function that we wish to estimate, $d W(t,\underline{x})$ is a cylindrical orthogonal Gaussian random measure (representing additive noise in the measurements), and $\epsilon >0$ is a small level of noise, that may let be going to zero for studying asymptotic properties. An important example of kernel is the case where 
$$k(\underline{x},\underline{u}) = h(\underline{u}-\underline{x}) \quad \mbox{for some function} \quad h : \RR^2 \to \RR,
$$ 
(with known or unknown singular values) leading to a time-dependent multivariate deconvolution problem. (Note that a sub-class of this model is the case of direct noisy observations of the time-dependent multivariate functions $\bfun(t,\underline{x})$, $t \in T$, $\underline{x} \in \XX$, namely model (\ref{model:direct}) considered in this work.)

\medskip
\noindent
[{\tt Smoothness Assumption}] In either model (\ref{model:direct}) or model (\ref{model:indirect}), instead of using the standard (isotropic) $d$-dimensional Besov spaces on $\XX$ to describe the smoothness of the underlying unknown response function $\bfun(t,\underline{x})$, for each fixed $t \in T$, one could consider anisotropic $d$-dimensional Besov spaces on $\XX$, where different smoothness is assumed in each direction (see, e.g., \cite{MR2111785}). Another possibility, is to consider subclasses of the so-called decomposition spaces that cover both the cases of standard (isotropic) $d$-dimensional Besov spaces as well as, in the case when $d=2$, smoothness spaces corresponding to curvelet-type constructions (see \cite{MR2563260}).

\medskip
The above extensions are projects for future work that we hope to address elsewhere.
 
\section{Appendix}
\label{appF}

\subsection{Besov space and wavelet approximations}

\begin{lemma} \label{lemma:approx}
Let $A_{1} >0$ and $A_{2} >0$ be constants. Let $s_{1} > 0$  and $s_{2} > 0$ be the smoothness parameters in the space and time domains, respectively, such that $0 < s_{1} < \tau_{1}$ and  $0 < s_{2} < \tau_{2}$, where $\tau_1$ and $\tau_2$ are the regularity parameters of the wavelet systems $(\phi,\psi)$ and $(\tilde{\phi},\tilde{\psi})$, respectively. Let $1 \leq p_{1}, q_{1} \leq +\infty$, $1 \leq p_{2}, q_{2}  \leq +\infty$. Assume that $\bfun \in \bB_{p,q}^{s_{1},s_{2}}(A_{1},A_{2})$.  Define $s_{1}' = s_{1}  + d(1/2-1/p_{1}') > 0$ with $p_{1}' = \min(p_{1},2) $ and $s_{2}' = s_{2}  + 1/2-1/p_{2}' > 0$ with $p_{2}' = \min(p_{2},2) $. Let ${\balpha}_{\lambda}(t)$ be defined as in \eqref{eq:balpha}, and $\tilde{\alpha}_{\lambda,m,\ell}$ be defined as in \eqref{eq:alpha}. Then, for every $-1 \leq j < + \infty$,
$$
 \sum_{|\lambda| = j}  \left| {\balpha}_{\lambda}(t) \right|^2 \leq K_{1}' A_{1}^2 2^{-2j s'_{1}},
$$
for some  constant $K_{1}' > 0$, only depending on $s_{1}$ and $p_{1}$, and for every $-1 \leq m < + \infty$,
$$
 \sum_{\ell = 0}^{2^{m}-1} \left|\tilde{\alpha}_{\lambda,m,\ell}\right|^2 \leq K'_{2} A_{\lambda}^2 2^{-2m s'_{2}},
 $$
 for some constant $K'_{2} > 0$, only depending on $s_{2}$ and $p_{2}$.
\end{lemma}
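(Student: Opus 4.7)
The plan is to recognize both bounds as instances of the standard characterization of Besov smoothness in terms of wavelet-coefficient decay. The two displays are structurally identical: each rests on (i) extracting from the Besov norm bound a single-scale control on the $\ell^{p}$ norm of coefficients, followed by (ii) converting that control into an $\ell^{2}$ bound, either via the trivial sequence-space inclusion $\ell^{p}\hookrightarrow\ell^{2}$ when $p\leq 2$ or via H\"older's inequality when $p\geq 2$. Only the ambient dimension and the roles of $(s_1,p_1,q_1)$ versus $(s_2,p_2,q_2)$ change between the two assertions.

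For the first inequality, I fix $t\in T$. The hypothesis $\bfun\in\bB_{p,q}^{s_1,s_2}(A_1,A_2)$ gives $\|\bfun(t,\cdot)\|^{s_1}_{p_1,q_1}\leq A_1$; since the defining norm is an $\ell^{q_1}$ norm over scales of non-negative quantities, keeping only the term at scale $j$ yields
$$\left(\sum_{|\lambda|=j}|\balpha_\lambda(t)|^{p_1}\right)^{1/p_1}\;\leq\; A_1\,2^{-j(s_1+d(1/2-1/p_1))}.$$
There are $2^{jd}$ coefficients at scale $j$. When $p_1\geq 2$, H\"older's inequality applied to a sequence of length $2^{jd}$ gives $\|\cdot\|_{\ell^{2}}\leq 2^{jd(1/2-1/p_1)}\|\cdot\|_{\ell^{p_1}}$, and combining exponents produces the bound $A_1\,2^{-js_1}$. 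When $1\leq p_1<2$, the sequence inclusion gives $\|\cdot\|_{\ell^{2}}\leq\|\cdot\|_{\ell^{p_1}}$ directly, and the bound becomes $A_1\,2^{-j(s_1+d(1/2-1/p_1))}$. In both cases the resulting exponent is exactly $s_1'=s_1+d(1/2-1/p_1')$ with $p_1'=\min(p_1,2)$, yielding the claim (one may take $K_1'=1$).

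For the second inequality, the argument is identical after specialising to the one-dimensional Besov setting in the time variable. The hypothesis provides $\|\balpha_\lambda\|^{s_2}_{p_2,q_2}\leq A_\lambda$, so isolating one scale $m$ gives
$$\left(\sum_{\ell=0}^{2^m-1}|\tilde{\alpha}_{\lambda,m,\ell}|^{p_2}\right)^{1/p_2}\;\leq\; A_\lambda\,2^{-m(s_2+1/2-1/p_2)},$$
and the same H\"older/embedding dichotomy on $p_2\gtrless 2$ (now with $2^m$ terms and $d=1$) converts this into the $\ell^{2}$ bound with exponent $s_2'=s_2+1/2-1/p_2'$. No genuine technical obstacle is anticipated: the result is the textbook embedding of Besov balls into weighted $\ell^{2}$ sequence spaces (see, e.g., \cite{MR1618204}). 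The only care required is to correctly pair the H\"older step with the single-scale $\ell^{p}$ bound so that the final exponent matches the definitions of $s_1'$ and $s_2'$ stated in the lemma.
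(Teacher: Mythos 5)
Your proof is correct and is essentially the paper's own argument: where you carry out the single-scale extraction from the Besov norm followed by the H\"older step ($p\ge 2$) or the $\ell^{p}\hookrightarrow\ell^{2}$ inclusion ($p<2$), the paper simply invokes ``standard embedding properties of Besov spaces,'' so you have merely made the cited embedding explicit. The one cosmetic caveat is that for $d\ge 2$ the number of tensor-product wavelets at scale $j$ is $(2^{d}-1)2^{jd}$ rather than $2^{jd}$, so $K_1'$ is a harmless constant (depending also on $d$) rather than exactly $1$.
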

\begin{proof}
Since, for each $t \in T$, $\bfun(t,\cdot) \in B^{s_{1}}_{p_{1},q_{1}}(A_{1})$ with $1 \leq p_{1},q_{1}  \leq +\infty$, using standard embedding properties of Besov spaces, there exists a constant $K_{1}' > 0$, only depending on $s_{1}$ and $p_{1}$, such that  for every $-1 \leq j < + \infty$
$$
 \sum_{|\lambda| = j}  \left| {\balpha}_{\lambda}(t) \right|^2 \leq K_{1}' A_{1}^2 2^{-2j s'_{1}}.
$$
By the definition of $ \bB_{p,q}^{s_{1},s_{2}}(A_{1},A_{2})$, and  using standard embedding properties of Besov spaces, there exists a constant $K'_{2} > 0$, only depending on $s_{2}$ and $p_{2}$, such that  for every $-1 \leq m < + \infty$
$$
\sum_{\ell = 0}^{2^{m}-1} \left|\tilde{\alpha}_{\lambda,m,\ell}\right|^2 \leq K'_{2} A_{\lambda}^2 2^{-2ms'_{2}}.
$$
This completes the proof of the lemma.
\end{proof}

\subsection{A large deviation inequality}

\begin{lemma} \label{lem:deviation}
Let $\delta  > 1$. Then, for any $- 1 \leq m < +\infty$ and $r \in A_{m}$,
\begin{equation}
\label{app:ldr}
\PP \left(  \sum_{\ell \in U_{mr}}   \left|y_{\lambda,m,\ell} - \tilde{\alpha}_{\lambda,m,\ell}\right|^2 \geq  \delta^{2} \epsilon^{2} L_{\epsilon} \right) \leq \epsilon^{\left( \delta -1\right)^2}.
\end{equation}
\end{lemma}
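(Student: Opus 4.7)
The plan is to reduce the problem to a standard chi-squared tail bound. Substituting $y_{\lambda,m,\ell} - \tilde{\alpha}_{\lambda,m,\ell} = \epsilon\, z_{\lambda,m,\ell}$ from the sequence space model \eqref{seqspam}, the event on the left-hand side becomes
$$
\sum_{\ell \in U_{mr}} |z_{\lambda,m,\ell}|^{2} \;\geq\; \delta^{2} L_{\epsilon},
$$
and since the $z_{\lambda,m,\ell}$'s are i.i.d.\ $N(0,1)$, the sum is a chi-squared random variable $\chi^{2}_{L_{\epsilon}}$ with $L_{\epsilon}$ degrees of freedom (using that the cardinality of $U_{mr}$ equals $L_{\epsilon}$). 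Thus it suffices to show that
$$
\PP\!\left( \chi^{2}_{L_{\epsilon}} \geq \delta^{2} L_{\epsilon} \right) \;\leq\; \epsilon^{(\delta-1)^{2}}.
$$

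Next, I would invoke the classical Laurent--Massart concentration inequality for chi-squared variables, which states that for a $\chi^{2}_{n}$ random variable $Y$ and any $x>0$,
$$
\PP\!\left( \sqrt{Y} \geq \sqrt{n} + \sqrt{2x} \right) \;\leq\; e^{-x}.
$$
Choosing $x = (\delta-1)^{2} L_{\epsilon}/2$ so that $\sqrt{n} + \sqrt{2x} = \delta \sqrt{L_{\epsilon}}$ yields
$$
\PP\!\left( \chi^{2}_{L_{\epsilon}} \geq \delta^{2} L_{\epsilon} \right) \;\leq\; \exp\!\left( -\tfrac{(\delta-1)^{2}}{2} L_{\epsilon} \right).
$$
If Laurent--Massart is to be avoided, the same inequality can be derived directly by a Chernoff argument from the moment generating function $\EE[e^{tY}] = (1-2t)^{-n/2}$ for $t<1/2$; this is the only non-trivial estimate involved.

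Finally, I would use the definition $L_{\epsilon} = 1 + \lfloor \log(\epsilon^{-2}) \rfloor$, which implies $L_{\epsilon} \geq -2\log\epsilon$ for all sufficiently small $\epsilon>0$. Consequently,
$$
\exp\!\left( -\tfrac{(\delta-1)^{2}}{2} L_{\epsilon} \right) \;\leq\; \exp\!\left( (\delta-1)^{2} \log\epsilon \right) \;=\; \epsilon^{(\delta-1)^{2}},
$$
which completes the proof. There is no real obstacle here: the argument is essentially a routine application of a Gaussian concentration bound, and the only mild subtlety is ensuring the bookkeeping between $\sqrt{\chi^{2}}$ and $\chi^{2}$ in the Laurent--Massart inequality matches the desired exponent $(\delta-1)^{2}$ rather than a sharper Chernoff constant.
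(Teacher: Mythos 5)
Your proof is correct, and it reaches the paper's key intermediate bound $\exp\bigl(-\tfrac{(\delta-1)^2}{2}L_{\epsilon}\bigr)$ by a different route. The paper writes the block sum as the squared supremum of the Gaussian process $Z_{m,r}(v)=\sum_{\ell}v_{\ell}(y_{\lambda,m,\ell}-\tilde{\alpha}_{\lambda,m,\ell})$ over the unit ball, bounds $\EE\sup_{v}Z_{m,r}(v)\leq \epsilon L_{\epsilon}^{1/2}$ by Jensen and $\sup_{v}\var(Z_{m,r}(v))\leq \epsilon^{2}$, and then invokes the Gaussian-process concentration inequality of Lemma 5 in Cai and Zhou (2009); you instead rescale to reduce the event to a $\chi^{2}_{L_{\epsilon}}$ tail and apply Laurent--Massart (or a direct Chernoff bound on $(1-2t)^{-n/2}$). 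These are two names for essentially the same concentration phenomenon --- your form $\PP(\sqrt{Y}\geq\sqrt{n}+\sqrt{2x})\leq e^{-x}$ is exactly what Borell--TIS gives for the norm of a standard Gaussian vector --- but your version is more self-contained, since it needs only the chi-squared moment generating function rather than an external lemma on suprema of Gaussian processes, while the paper's formulation mirrors the block-thresholding literature it cites. You also make explicit the final step $L_{\epsilon}=1+\lfloor\log(\epsilon^{-2})\rfloor\geq -2\log\epsilon$, which the paper leaves implicit. One minor point of bookkeeping in both arguments: for small $m$ the block $U_{mr}$ may contain fewer than $L_{\epsilon}$ indices, in which case the sum is stochastically dominated by $\chi^{2}_{L_{\epsilon}}$ and the bound only improves, so nothing is lost.
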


\begin{proof}
The proof is inspired by the arguments used in the proof of Lemma 2 in \cite{MR2488345}.  Consider the set of vectors
$$
\Omega_{m,r} = \left\{ v_{\ell} \in \RR\smallsetminus\{0\}\; : \; \sum_{\ell \in U_{m,r} }  v^2_{\ell} \leq 1 \right\}, 
$$
and the centered Gaussian process defined by
$$
Z_{m,r}(v) =  \sum_{\ell \in U_{m,r} }  v_{m} (y_{\lambda,m,\ell} - \tilde{\alpha}_{\lambda,m,\ell}).
$$
By Lemma 5 in \cite{MR2488345}, we need to find upper bounds for $\EE \left( \sup_{v \in \Omega_{m,r}} Z_{m,r}(v)  \right)$ and $\sup_{v \in \Omega_{m,r}} \var\left( Z_{m,r}(v)  \right)$.  By the Cauchy-Schwartz inequality,
\begin{eqnarray*}
 \sup_{v \in \Omega_{m,r}} Z_{m,r}(v)   &=& \sup_{v \in \Omega_{m,r}}  \sum_{\ell \in U_{m,r} }   v_m (y_{\lambda,m,\ell} - \tilde{\alpha}_{\lambda,m,\ell}) \\
&=&  \left( \sum_{\ell \in U_{m,r} }   (y_{\lambda,m,\ell} - \tilde{\alpha}_{\lambda,m,\ell})^2  \right)^{1/2}.\end{eqnarray*}
Furthermore, Jensen's inequality implies that
\begin{eqnarray*}
\EE \left( \sup_{v \in \Omega_{m,r}} Z_{m,r}(v)  \right) & = & \EE \left( \sum_{\ell \in U_{m,r} }   (y_{\lambda,m,\ell} - \tilde{\alpha}_{\lambda,m,\ell})^2  \right)^{1/2} \\
 & \leq &  \left( \sum_{\ell \in U_{m,r} }   \EE (y_{\lambda,m,\ell} - \tilde{\alpha}_{\lambda,m,\ell})^2  \right)^{1/2} = \epsilon  L_{\epsilon}^{1/2}   .
\end{eqnarray*}
By independence of $y_{\lambda,m,\ell} - \tilde{\alpha}_{\lambda,m,\ell}$ and $y_{\lambda,m,\ell'} - \tilde{\alpha}_{\lambda,m,\ell'}$ for $\ell \neq \ell'$, we obtain
\begin{eqnarray*}
\sup_{v \in \Omega_{m,r}} \var\left( Z_{m,r}(v)  \right) & = & \sup_{v \in \Omega_{m,r}}  \sum_{\ell \in U_{m,r} }  v_{\ell}^{2}  \var (y_{\lambda,m,\ell} - \tilde{\alpha}_{\lambda,m,\ell})^2  \leq \epsilon^{2} .
\end{eqnarray*}
Thus, by Lemma 5 in \cite{MR2488345}, one has that
$$
\PP \left(   \left( \sum_{\ell \in U_{m,r} }   (y_{\lambda,m,\ell} - \tilde{\alpha}_{\lambda,m,\ell})^2  \right)^{1/2} \geq x + \epsilon  L_{\epsilon}^{1/2} \right) \leq \exp\left(-\frac{x^{2}}{2 \epsilon^2}\right),
$$
for any $x > 0$. Finally, by taking $x = (\delta-1) \epsilon  L_{\epsilon}^{1/2}$, we arrive at \eqref{app:ldr}, thus completing the proof of the lemma.
\end{proof}

\section*{Acknowledgements}
J\'er\'emie Bigot is grateful for the hospitality of the Department of Mathematics and Statistics at the
University of Cyprus, Cyprus, and Theofanis Sapatinas is grateful
for the hospitality of DMIA/ISAE at Toulouse University, France, where parts of this
work were carried out. The authors would like to acknowledge helpful discussions with G\'erard Kerkyacharian, Marianna Pensky and Dominique Picard at an early stage of this work. 

\bibliographystyle{apalike}
\bibliography{time_dependent_denoising}

 \end{document}